\newtheorem{lem}{Lemma}[section]
\newtheorem{theorem}{Theorem}[section]
\newtheorem{cor}{Corollary}[section]
\newtheorem*{rmk}{Remark}
\newtheorem{prop}{Proposition}[section]
\newtheorem{Problem}{Problem}[section]
\newcommand{\Sym}{\operatorname{Sym}}
\newcommand{\Alt}{\operatorname{Alt}}
\newcommand{\Sp}{\operatorname{Sp}}
\newcommand{\SU}{\operatorname{SU}}
\newcommand{\Ort}{\operatorname{O}}
\newcommand{\Fi}{\operatorname{Fi}}
\newcommand{\Pom}{\operatorname{P\Omega}}
\newcommand{\wre}{\operatorname{wr}}
\title{On Jordan algebras that are factors of Matsuo algebras}
\author{Ilya Gorshkov, Andrey Mamontov and Alexey Staroletov\footnote{The work is supported by Mathematical Center in Akademgorodok under agreement No.075-15-2022-281 with the Ministry of Science and Higher Education of the Russian
Federation}}
\date{\vspace{-30px}}
\begin{document}

\maketitle
\newcommand{\Addresses}{{% additional braces for segregating \footnotesize
		\bigskip\noindent
		\footnotesize
	    Ilya~Gorshkov, \textsc{Sobolev Institute of Mathematics, Novosibirsk, Russia;}\\\nopagebreak
        \textsc{Novosibirsk State Technical University, Novosibirsk, Russia;}\\\nopagebreak
		\textit{E-mail address: }
        \texttt{ilygor8@gmail.com}

        \medskip\noindent
        Andrey~Mamontov, \textsc{Sobolev Institute of Mathematics, Novosibirsk, Russia;}\\\nopagebreak
		\textsc{Novosibirsk State University, Novosibirsk, Russia;}\\\nopagebreak
		\textit{E-mail address: } \texttt{andreysmamontov@gmail.com}

		\medskip\noindent
		Alexey~Staroletov, \textsc{Sobolev Institute of Mathematics, Novosibirsk, Russia;}\\\nopagebreak
        \textsc{Novosibirsk State University, Novosibirsk, Russia;}\\\nopagebreak
		\textit{E-mail address: } \texttt{staroletov@math.nsc.ru}

		\medskip
}}

\begin{abstract}
We describe all finite connected 3-transposition groups whose Matsuo algebras have nontrivial factors that are Jordan algebras. As a corollary, we show that if $\mathbb{F}$ is a field of characteristic 0, then there exist
infinitely many primitive axial algebras of Jordan type $\frac{1}{2}$ over $\mathbb{F}$ that are not factors of Matsuo algebras. As an illustrative example, we prove this for an exceptional Jordan algebra over $\mathbb{F}$.
\end{abstract}

{\bf Mathematics Subject Classification (2020):}  17A99; 20F29

{\bf Keywords:} 3-transposition group, Matsuo algebra, Jordan algebra, axial algebra of Jordan type

\section{Introduction}

Axial algebras of Jordan type were introduced by Hall, Rehren, and Shpectorov~\cite{hrs}
within the framework of the general theory of axial algebras.
The main inspiration for this theory are the Griess algebra~\cite{Gr82}, Majorana theory~\cite{I09}, and algebras associated with $3$-transposition groups~\cite{Matsuo}.
Modern results and open problems in the theory of axial algebras can be found in a recent survey~\cite{survey}.

Consider a commutative $\mathbb{F}$-algebra $A$, where $\mathbb{F}$ is a field of characteristic not equal to two. For the element $a$ of $A$ and $\lambda\in\mathbb{F}$, the $\lambda$-eigenspace for the adjoint operator $ad_a$ on $A$ is denoted by $A_\lambda(a)$. An idempotent whose adjoint operator is semisimple will be called an {\it axis}. If $A$ is generated by a set of axes, then $A$ is an {\it axial algebra}.
An axis $a$ is {\it primitive} if $A_1(a)$ is one-dimensional, i.e. spanned by $a$.
Suppose that $\eta\in\mathbb{F}$ and $0\neq\eta\neq1$. The commutative $\mathbb{F}$-algebra $A$ is a {\it primitive axial algebra of Jordan type $\eta$} provided it is generated by a set of primitive axes with each member $a$ satisfying the following properties:
$$A=A_1(a)\oplus A_0(a)\oplus A_\eta(a),A_0(a)^2\subseteq A_0(a),$$
and for all $\delta,\epsilon\in\{\pm\}$,
$$A_\delta(a)A\epsilon(a)\subseteq A_{\delta\epsilon}(a), \text{ where }
A_+(a)=A_1(a)\oplus A_0(a)\text{ and }A_-(a)=A_\eta(a).$$
These properties generalize the Peirce decomposition for idempotents in Jordan algebras, where
$\frac{1}{2}$ is replaced with $\eta$. In particular, this explains the motivation for the name of this class of axial algebras.

Another basic example of axial algebras of Jordan type are Matsuo algebras.
They were introduced by Matsuo~\cite{Matsuo} and later generalized in~\cite{hrs}.
Recall that a group $G$ is a  {\it $3$-transposition group} if it is generated by a normal set $D$ of involutions such that the order of the product of any pair of these involutions is not greater than three. Let $\eta$, as before, be an element of $\mathbb{F}$ distinct from 0 and 1. The Matsuo algebra $M_\eta(G,D)$ has $D$ as its basis, where each element of $D$ is an idempotent. Moreover, the product in $M_\eta(G,D)$ of two distinct elements $c,d\in D$ equals 0 if $|cd|=2$ and  $\frac{\eta}{2}(c+d-c^d)$ if $|cd|=3$.
It turns out that $M_\eta(G,D)$ is a primitive axial algebra of Jordan type $\eta$ with generating set of primitive axes $D$~\cite{hrs}. Moreover, it is known that if $\eta\neq\frac{1}{2}$,
then every primitive axial algebra of Jordan type $\eta\neq\frac{1}{2}$ is a factor algebra of a Matsuo algebra \cite{hrs,hss}.

It was conjectured in \cite{GS} that any primitive axial algebras of Jordan type $\frac{1}{2}$
is a Jordan algebra or a factor of a Matsuo algebra.
De Medts and Rehren classified Matsuo algebras that are Jordan algebras \cite{TR2017}. As a consequence, it can be concluded that most Matsuo algebras are not Jordan.
The motivation for this paper is the following question: are there examples of axial algebras of Jordan type $\frac{1}{2}$ that are not factors of Matsuo algebras? We provide examples of such algebras among Jordan algebras.
We focus on Matsuo algebras corresponding to connected $3$-transposition groups $(G,D)$, i.e. where $D$ is a conjugacy class of $3$-transpositions. If $D$ is a union of conjugacy classes, then the Matsuo algebra on $D$ is the direct sum of the corresponding Matsuo algebras constructed from each conjugacy class contained in $D$~\cite{hrs}. We say that two nontrivial connected 3-transposition groups $(G_1,D_1)$ and $(G_2, D_2)$ have the same central type if
$G_1/Z(G_1)$ and $G_2/Z(G_2)$ are isomorphic as 3-transposition
groups. It is easy to see that if two 3-transposition groups have the same central type, then their Matsuo algebras are isomorphic.

It turns out that every Matsuo algebra $M=M_{\eta}(G,D)$, where $(G,D)$ is a connected 3-transposition group, has a maximal ideal $M^\perp$ containing every proper ideal of $M$. In fact, this ideal is the radical of a symmetric bilinear form on $M$ (see Section~\ref{sec:matsuo}).  Clearly, if an algebra is Jordan then every homomorphic image is Jordan. This implies that $M$ has Jordan factors if and only if $M/M^\perp$ is Jordan. In this paper we describe all algebras $M$ satisfying the latter condition.
\begin{theorem}\label{t:main}
Let $\mathbb{F}$ be a field of characteristic $0$ and $\eta\in\mathbb{F}\setminus\{0,1\}$.
Suppose that $(G,D)$ is a finite connected $3$-transposition group and $M=M_{\eta}(G,D)$ is the Matsuo algebra constructed by $(G,D)$ and $\eta$. If $J=M/M^\perp$ is a Jordan algebra, then
one of the following statements holds.
\begin{enumerate}
 \item[$(i)$] $G$ is the cyclic group of order $2$ and so $J=M$ is one-dimensional;
 \item[$(ii)$] the product of every two distinct elements of $D$ has order $3$, $\eta=2$, and $J$ is one-dimensional;
 \item[$(iii)$] $\eta=\frac{1}{2}$ and
 $G$ has the same central type as one of the following $3$-transposition groups: $\Sym(m)~(m\geq2)$, $2^{\bullet1}:\Sym(m)~ (m\geq4)$, $3^{\bullet1}:\Sym(m)~(m\geq4)$, $3^2:2$,
 $O_8^+(2)$, $O_6^-(2)$, $Sp_6(2)$, ${}^+\Omega_6^-(3)$, $SU_4(2)$, $SU_5(2)$, or $4^{\bullet1}SU_3(2)'$. In particular, $\dim J\in\{1, m^2, \frac{m(m-1)}{2}\}$, where $m\geq3$.
\end{enumerate}
Moreover, each of the possibilities in items $(i)-(iii)$ is realized for some $M$.
\end{theorem}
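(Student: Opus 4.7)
The plan is to convert the Jordan property of $J = M/M^\perp$ into a combinatorial condition on small subsets of $D$. Since $M^\perp$ is the radical of the standard symmetric bilinear form on $M$ (whose Gram matrix entries in the $D$-basis are $1$, $0$, or $\eta/2$ according to whether two distinct axes coincide, commute, or generate $\Sym(3)$), an element $v\in M$ lies in $M^\perp$ iff $(v,d)=0$ for every $d\in D$. Because the Jordan identity $(x^2y)x = x^2(yx)$ linearizes to a multilinear identity of total degree four, $J$ being Jordan is equivalent to the vanishing, for each ordered $4$-tuple $(a,b,c,d)\in D^4$, of an explicit polynomial expression in $\eta$ whose coefficients are determined by the multiplication table among $\{a,b,c,d\}$ in $M$. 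Since this multiplication table depends only on the subgroup $\langle a,b,c,d\rangle\leq G$, the condition localizes: $J$ is Jordan iff a local identity holds inside every subalgebra of $M$ spanned by axes lying in a $4$-generated $3$-transposition subgroup.

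For the forward direction I would split on the order of products in $D$. If $|D|=1$ we are in case $(i)$. If $|D|\geq 2$ and all distinct pairs in $D$ have product of order $3$, then evaluating the local Jordan identity inside a $\Sym(3)$-subalgebra shows that nontriviality of $J$ forces $\eta=2$; a second pass through such ``triangular'' connected $3$-transposition groups (which by Fischer's theory are highly constrained) shows that the bilinear form then degenerates so severely that $J$ collapses to dimension one, giving case $(ii)$. If instead some pair in $D$ commutes, then an analogous local computation inside a $\Sym(4)$- or $2^2{:}\Sym(3)$-configuration forces $\eta=\tfrac12$. With $\eta=\tfrac12$ fixed, I would invoke Fischer's classification of finite connected $3$-transposition groups and, for each group not appearing in item $(iii)$, produce an explicit $4$-tuple of axes whose Jordan associator is not annihilated by the bilinear form against some $d\in D$, certifying failure of the Jordan identity modulo $M^\perp$.

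For the realization of each case in item $(iii)$ I would identify $J$ with a known Jordan algebra whose dimension matches the predicted $1$, $m^2$, or $\tfrac{m(m-1)}{2}$: for $\Sym(m)$ and its central covers the image in $J$ is related to the Jordan algebra of symmetric $m\times m$ matrices, while the cases involving orthogonal, symplectic, and $\SU$-type groups (including $O_8^+(2)$, $O_6^-(2)$, $\Sp_6(2)$, ${}^+\Omega_6^-(3)$, $\SU_n(2)$) correspond to Jordan algebras of Hermitian matrices over appropriate extensions of $\mathbb{F}$. The main obstacle is the pruning step in the forward direction: Fischer's list contains a considerable number of connected groups (notably the large sporadic Fischer groups and various families of central extensions), and for each excluded case one must exhibit a concrete configuration of four $3$-transpositions where the Jordan associator fails to be killed by passage to $M/M^\perp$. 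Because the radical can be large, one must track images in $J$ carefully rather than working only in $M$, and this bookkeeping -- together with the uniform treatment of the central extensions $2^{\bullet 1}{:}\Sym(m)$, $3^{\bullet 1}{:}\Sym(m)$, and $4^{\bullet 1}\SU_3(2)'$ appearing in item $(iii)$ -- is the most delicate part of the argument.
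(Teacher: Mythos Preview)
Your overall architecture (linearize the Jordan identity, reduce to $4$-tuples of axes, run through the classification) matches the paper's, but two steps in your proposal do not work as stated, and one key tool is missing.

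\textbf{Localization only goes one way.} You claim that ``$J$ is Jordan iff a local identity holds inside every subalgebra of $M$ spanned by axes lying in a $4$-generated $3$-transposition subgroup.'' The associator $w(a,b,c,d)$ does lie in the subalgebra $M_H$ for $H=\langle a,b,c,d\rangle$, and $M^\perp\cap M_H\subseteq M_H^\perp$, so if $w\notin M_H^\perp$ then $w\notin M^\perp$: exclusion localizes. But the converse fails: $w\in M_H^\perp$ does \emph{not} imply $w\in M^\perp$, since the latter requires $(w,e)=0$ for all $e\in D$, not just $e\in D\cap H$. The paper handles inclusion for the finite sporadic cases by a direct GAP verification on the full algebra, and for the two infinite families $Wr(p,m)$ ($p\in\{2,3\}$) by first writing down an explicit basis of $M^\perp$ (their Lemma~\ref{l:radical}) whose elements satisfy an \emph{enlargement property}: each basis vector for the radical in $Wr(p,k)$ remains in the radical of $Wr(p,n)$ for all $n\geq k$. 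This is exactly the extra ingredient your localization claim needs and does not supply.

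\textbf{The $\eta$-determination via $\Sym(3)$ does not work.} A $\Sym(3)$-subalgebra of $M$ is Jordan for \emph{every} $\eta$, so no Jordan-identity computation inside it can pin down $\eta$. The paper's argument for $\eta\neq\tfrac12$ is cleaner and avoids the Jordan identity entirely: if $c,d,e$ are the three points of a line in the Fischer space, then $e\cdot(c-d)=\eta(c-d)$, so $\overline{c}-\overline{d}$ is an $\eta$-eigenvector for the idempotent $\overline{e}$ in $J$. By the Peirce decomposition of a Jordan algebra the only eigenvalues are $0,1,\tfrac12$, hence $\overline{c}=\overline{d}$ whenever $\eta\neq\tfrac12$. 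Connectedness of $(D)$ then collapses $J$ to dimension~$1$; the constraints ``no commuting pair'' and ``$\eta=2$'' then drop out of $(c,c-d)=1-\eta/2=0$.

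\textbf{The spectrum tool.} The paper's efficient pruning uses the identity $\mathcal{M}=I+\tfrac{\eta}{2}\mathcal{A}$ between the Gram matrix of the Frobenius form and the adjacency matrix of $(D)$. For $\eta=\tfrac12$ this makes $\dim M^\perp$ equal to the multiplicity of $-4$ in $\operatorname{Spec}((D))$, and the Hall--Shpectorov spectrum tables immediately cut the Cuypers--Hall list down to a handful of candidates. Without this, your proposed case-by-case exclusion over ``Fischer's list'' (you will in fact need the full Cuypers--Hall classification, not just Fischer's) becomes an unbounded search, especially across the infinite families of central extensions.
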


In Section~\ref{sec:proof}, we discuss possible Matsuo algebras $M$ satisfying the hypothesis of this theorem and the corresponding 3-transposition groups $(G,D)$ in detail (see Proposition~\ref{p:main}). Note that the case $M^\perp=0$ was considered in \cite{TR2017}. Moreover, it was mentioned in \cite[Remark~3.5]{TR2017}
that the Weyl groups for simply-laced root systems of types $E_n$ with $6\leq n\leq 8$ and $D_n$, considered as 3-transposition groups, correspond to Matsuo algebras that have among their factors the Jordan algebra (of dimension $\frac{n(n+1)}{2}$) of all symmetric $n\times n$ matrices.

Note that for every integer $n\geq1$ there exists a simple Jordan algebra of dimension $n$ which is a primitive axial algebra of Jordan type $\frac{1}{2}$. As an example one can take a so-called Jordan spin factor algebra (see, for example, \cite[Lemma~5.1]{hss}). This together with Theorem~\ref{t:main} implies the following corollary.

\begin{cor}
If $\mathbb{F}$ is a field of characteristic 0, then there exist infinitely many primitive axial algebras of Jordan type $\frac{1}{2}$ over $\mathbb{F}$ that are not factors of Matsuo algebras.
\end{cor}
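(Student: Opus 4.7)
The plan is to exhibit explicitly an infinite family of primitive axial Jordan algebras of Jordan type $\frac{1}{2}$ whose dimensions are excluded, by Theorem~\ref{t:main}, from the set of dimensions that Jordan factors of Matsuo algebras can take. I take as this family $J_n$, the Jordan spin factor algebra of dimension $n$; by the reference cited just above the corollary, each $J_n$ ($n\geq 1$) is a primitive axial algebra of Jordan type $\frac{1}{2}$, and for $n\geq 2$ it is moreover simple as a Jordan algebra and satisfies $J_n^{\,2}\neq 0$.

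The key step is to show that if $J_n$ is a factor of some Matsuo algebra, then $n$ lies in $S:=\{1\}\cup\{m^2:m\geq 3\}\cup\{m(m-1)/2:m\geq 3\}$. To this end I would start from a hypothetical surjection $\phi\colon\mathcal{M}\twoheadrightarrow J_n$ with $\mathcal{M}=M_\eta(G,D)$. Decomposing $D$ into conjugacy classes gives $\mathcal{M}=M_1\oplus\cdots\oplus M_k$ with $M_iM_j=0$ for $i\neq j$, and I would argue that simplicity of $J_n$ together with the condition $J_n^{\,2}\neq 0$ forces $\phi$ to factor through a single connected summand $M_i$. Next, since $\dim J_n<\infty$, I would pass to a finite subset of the basis axes of $M_i$ whose images still generate $J_n$ and replace $G_i$ by the finite subgroup they generate, producing a surjection from a connected \emph{finite} Matsuo algebra $M$ onto $J_n$. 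The kernel is then a maximal ideal of $M$ and hence, by the discussion preceding Theorem~\ref{t:main}, coincides with $M^{\perp}$; thus $J_n\cong M/M^{\perp}$ is Jordan in the sense of Theorem~\ref{t:main}, and that theorem yields $n\in S$.

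The corollary then follows because the complement of $S$ in $\mathbb{Z}_{>0}$ is infinite---for instance $2,4,5,7,8,11,12,13,14,\ldots$ all lie outside $S$. The main obstacle will be the two reductions above: showing that $\phi$ factors through a single connected summand of $\mathcal{M}$, and passing from an arbitrary generating set of axes to a finite one that still surjects onto $J_n$. Both are conceptually routine but deserve care; everything else reduces to a direct invocation of Theorem~\ref{t:main} or an elementary check on $S$.
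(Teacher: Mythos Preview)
Your approach is correct and is precisely the paper's: Jordan spin factor algebras exist in every dimension, and Theorem~\ref{t:main} pins down the dimensions that can arise as $M/M^\perp$ for finite connected $M$, leaving infinitely many excluded values. The paper leaves the two reductions you identify implicit. One small refinement: when you pass to the finite subgroup $H=\langle d_1,\ldots,d_r\rangle$ (finiteness here is the Cuypers--Hall local finiteness noted after Theorem~\ref{t:classification}), the resulting $3$-transposition set $D\cap H$ need not be a single $H$-conjugacy class, so you must apply your first reduction once more to reach a finite \emph{connected} Matsuo algebra before invoking Theorem~\ref{t:main}.
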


In Section~\ref{sec:albert}, we present another example, which is the cornerstone in the theory of Jordan algebras. Given a field $\mathbb{F}$ of characteristic $0$,
we show that a $27$-dimensional Albert algebra over $\mathbb{F}$ is an axial algebra of Jordan type $\frac{1}{2}$ generated by four primitive axes. Theorem~\ref{t:main} implies that this
algebra, known to be a simple Jordan algebra, is not a factor of a Matsuo algebra.

Finally, we mention two results on the status of the conjecture formulated above.
Gorshkov and Staroletov proved that every axial algebra of Jordan type $\frac{1}{2}$
generated by at most three primitive axes is Jordan and has dimension not exceeding nine~\cite{GS}.
It has recently been proved that the dimension of a 4-generated algebra does not exceed 81, which is the dimension of a 4-generated Matsuo algebra~\cite{TRS}.
In the general case, it is not even known whether an axial algebra of Jordan type generated by a finite number of primitive axes has a finite dimension or not.

The proof of Theorem~\ref{t:main} is based on the classification of 3-transposition groups and the dimensions of the eigenspaces of diagrams on the corresponding sets of 3-transpositions. The necessary definitions and results are given in Section~\ref{sec:3-transp}. In Section~\ref{sec:matsuo}, we provide the necessary information on Jordan and Matsuo algebras. In Section~\ref{sec:wreath}, we give a convenient description of the $3$-transposition groups that are obtained from the symmetric group by the wreath product construction, these groups are a special case in the proof of Theorem~\ref{t:main}. Section~\ref{sec:proof} is devoted to this proof. Finally, in Section~\ref{sec:albert} we show that an Albert algebra over a field of characteristic different from $2$ and $3$ is a primitive axial algebra of Jordan type $\frac{1}{2}$.

\section{Preliminaries: 3-transposition groups}\label{sec:3-transp}

Suppose that $G$ is a group and $D$ is a normal set of involutions, i.e. a union of conjugacy classes of elements of order 2. If for every pair $d,e\in D$ the order of $de$ is at most 3, then $D$ is called a set of 3-transpositions. This notion was introduced by Fischer as a generalization of properties of transpositions in symmetric groups~\cite{Fi66}.

We say that $(G,D)$ is a 3-transposition group if $D$ generates $G$ and is a set of 3-transpositions.
If $S$ is a subset of $D$, the diagram of $S$, denoted $(S)$,
is the graph whose vertices are elements of $S$ with the pair $\{d,e\}$ forming an edge precisely when $|de|=3$. This notion is important for 3-transpositions groups since the subgroup of $G$ generated by $S$ is a homomorphic image of the Coxeter group with diagram $(S)$.

\begin{lem}{\em\cite[(1.2) and Lemma~(2.1.1)]{F}}\label{l:inherit} Suppose that $D$ is a set of $3$-transpositions in $G$. Then the following statements hold.
\begin{enumerate}[(i)]
\item If $H$ is a subgroup of $G$, then $D\cap H=\varnothing$ or $D\cap H$ is a set of $3$-transpositions in $H$. If $N$ is a normal subgroups of $G$, then $D\subset N$ or the nontrivial elements of $DN/N$ form a normal a set of $3$-transpositions in $G/N$.
\item Let $D_i$, for $i\in I$, be the connected components of $(D)$. Then each $D_i$ is a conjugacy class of $3$-transpositions in the group
 $G_i=\langle D_i\rangle$. Furthermore, the normal subgroup $\langle D\rangle$ is the central product of its subgroups $G_i$.
 \item If $G=\langle D\rangle$ then, for $d\in D\setminus Z(G)$ each coset $dZ(G)$ meets $D$ only in $d$.
\end{enumerate}

\end{lem}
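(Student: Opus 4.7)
The plan is to establish the three items by direct manipulation of products of involutions, using only the hypothesis $|de|\leq 3$ for $d,e\in D$ and the normality of $D$ in $G$.

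For (i), I would note that $D\cap H$ consists of involutions, is normal in $H$ (its $H$-conjugates lie in $D$ by normality of $D$ in $G$ and in $H$ by closure), and inherits the product-order bound from $D$; hence if nonempty it is a set of $3$-transpositions in $H$. For the quotient, the images $dN$ with $d\in D\setminus N$ are involutions in $G/N$, the resulting set is normal by normality of $D$, and $|(dN)(eN)|$ divides $|de|\leq 3$.

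For (ii), the key observation is that whenever $|de|=3$, the subgroup $\langle d,e\rangle$ is isomorphic to the symmetric group $S_3$, whose three involutions are $d$, $e$, and $ded=ede$, any two of which are conjugate inside $\langle d,e\rangle$. Consequently, along each edge $\{d_{j-1},d_j\}$ of the diagram $(D)$, conjugation by an element of $\langle d_{j-1},d_j\rangle$ sends $d_{j-1}$ to $d_j$; iterating along a path in a connected component $D_i$ yields an element of $G_i=\langle D_i\rangle$ conjugating any member of $D_i$ to any other, so $D_i$ is a single $G_i$-conjugacy class of $3$-transpositions. Elements of distinct components commute by the very definition of a component, giving $[G_i,G_j]=1$ for $i\neq j$, and the central product decomposition $\langle D\rangle=G_1G_2\cdots$ follows.

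For (iii), suppose $d'=dz\in D$ with $z\in Z(G)$; the goal is to show $z=1$. Writing $z=dd'$, centrality of $z$ forces $d$ and $d'$ to commute, whence $z^2=1$. If $z\neq 1$, then since $d\notin Z(G)$ and $G=\langle D\rangle$, some $e\in D$ satisfies $|de|=3$; using centrality of $z$, a short computation yields $(d'e)^3=(de)^3z^3=z^3=z\neq 1$, contradicting $|d'e|\leq 3$. This final item is where I expect the only real work: one must first recognize that $z$ is an involution before the identity $(d'e)^3=z^3$ becomes useful, and the witness $e$ must be produced from the hypothesis $d\notin Z(G)$ combined with $G=\langle D\rangle$. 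The other items reduce to bookkeeping once the $S_3$ observation from (ii) is in hand.
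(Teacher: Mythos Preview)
The paper does not supply a proof of this lemma at all; it is stated with a citation to Fischer's original paper and used as a black box. So there is no ``paper's proof'' to compare against, and your sketch stands on its own. Parts (i) and (ii) are fine as written: the $S_3$ observation carries the connectivity-to-conjugacy argument, and commuting of generators in distinct components gives $[G_i,G_j]=1$, from which the central-product statement follows.

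There is one small gap in your argument for (iii). You conclude with ``$(d'e)^3=z\neq 1$, contradicting $|d'e|\leq 3$''. But $|d'e|\leq 3$ by itself does \emph{not} force $(d'e)^3=1$: if $|d'e|=2$ then $(d'e)^3=d'e\neq 1$ and there is no contradiction. What you need is $|d'e|=3$ specifically. This is easy to obtain: you chose $e$ so that $e$ does not commute with $d$, and since $z$ is central, $e$ cannot commute with $d'=dz$ either (else $e$ would commute with $d=d'z^{-1}$). Hence $|d'e|\notin\{1,2\}$, so $|d'e|=3$, and then $(d'e)^3=1$ gives the desired contradiction $z=1$. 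With that one extra sentence inserted, your proof is complete.
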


It follows that 3-transposition groups $(G,D)$ with connected diagram $(D)$ are of prime interests.
We say that $(G,D)$ is a connected 3-transposition group if $(D)$ is connected.
Note that this is equivalent to $D$ being a conjugacy class of $G$.
We say that the two connected 3-transposition groups $(G_1,D_1)$ and $(G_2,D_2)$ have the same {\it central type} provided $G_1/Z(G_1)$ and $G_2/Z(G_2)$ are isomorphic as $3$-transposition groups. By Lemma~\ref{l:inherit}(iii), two connected 3-transpositions groups have the same central type if only if their diagrams are isomorphic.

Finite connected 3-transposition groups $(G,D)$ such that $O_2(G)O_3(G)\leq Z(G)$ were classified by Fischer in \cite{F}.
Basic examples of such groups are the following:
symmetric group $\Sym(m)$ with $m=2$ or $m\geq5$ and $D$ being a set of transpositions;
symplectic group $\Sp_{2m}(2)$, where $m\geq3$ and $D$ is the set of symplectic transvections;
unitary group $\SU_m(2)$, where $m\geq4$ and $D$ is the set of unitary transvections;
orthogonal group $\Ort_{2m}^\epsilon(2)$, where
$D$ is the set of orthogonal transvections,
$m\geq3$, and either $\epsilon=+$ if the Witt index equals $m$ or $\epsilon=-$ if the Witt index equals $m-1$; five groups of sporadic type (in notation of \cite{CH95}): $\Fi_{22}$, $\Fi_{23}$, $\Fi_{24}$, $\Pom_8^+(2):\Sym(3)$, $\Pom_8^+(3):\Sym(3)$. There are two more infinite series of 3-transposition groups in Fischer's classification paper: $\Omega_m^\pm(3)$, where $m\geq5$. Consider an orthogonal group $\Ort^\epsilon_{2m}(3)$ corresponding to a symmetric bilinear form $b(\cdot,\cdot)$ over a field of order three, where $\epsilon$ is defined as above depending on the Witt index.
The group $^+\Omega^\epsilon_{2m}(3)$ is then the subgroup of $\Ort^\epsilon_{2m}(3)$ generated by the 3-transposition conjugacy class $D^+$ of all reflections $d=\sigma_x$ with centers $x$ having $b(x,x)=1$. The corresponding odd degree group $^+\Omega^\epsilon_{2m-1}(3)$ is found within
$^+\Omega^\epsilon_{2m}(3)$ as $\langle D_d\rangle$, where $D_d=C_{D^+}(d)\setminus\{d\}$ for an arbitrary 3-transposition $d\in D^+$. In what follows, we do not need explicit group constructions, but only some properties of their diagrams.

Cuypers and Hall extended Fischer's classification in~\cite{HS95} by dropping the assumptions $O_2(G)O_3(G)\leq Z(G)$ and finiteness of $G$. As a consequence, they showed that every 3-transposition group is locally finite, i.e every finite subset of the group generates a finite subgroup.
Naturally, the groups in the general classification are extensions of the groups obtained by Fischer. For the connected 3-transposition group $(G,D)$, we write $p^{\bullet h}$ with $p\in\{2,3\}$, for a normal $p$-subgroup $N$ with $|D\cap dN|=p^h$ for all $d\in D$. We give a simplified formulation of the classification which is taken from~\cite{Spectra21} and sufficient for our purposes.

\begin{theorem}{\em (Cuypers--Hall Classification Theorem)\cite[Theorem~5.3]{Spectra21}}\label{t:classification}.
Let $(G,D)$ be a finite connected $3$-transposition group. Then, for integral $m$ and $h$,
the group $G$ has one of the central types below. Furthermore, for each $G$ the generating class $D$
is uniquely determined up to an automorphism of $G$.

{\bf PR1.} $3^{\bullet h}:\Sym(2)$, all $h\geq1$;

{\bf PR2(a).} $2^{\bullet h}:\Sym(m)$, all $h\geq0$, all $m\geq4$;

{\bf PR2(b).} $3^{\bullet h}:\Sym(m)$, all $h\geq1$, all $m\geq4$;

{\bf PR2(c).} $3^{\bullet h}:2^{\bullet1}:\Sym(m)$, all $h\geq1$, all
$m\geq4$;

{\bf PR2(d).} $4^{\bullet h}:3^{\bullet1}:\Sym(m)$, all $h\geq1$, all $m\geq4$;

{\bf PR3.} $2^{\bullet h}:\Ort_{2m}^\epsilon(2)$, $\epsilon=\pm$, all $h\geq0$, all $m\geq3$,
$(m,\epsilon)\neq(3,+)$;

{\bf PR4.} $2^{\bullet h}:\Sp_{2m}(2)$, all $h\geq0$, all $m\geq3$;

{\bf PR5.} $3^{\bullet h}{}^+\Omega_{m}^\epsilon(3)$, $\epsilon=\pm$, all $h\geq0$, all $m\geq5$;

{\bf PR6.} $4^{\bullet h}\SU_m(2)'$, all $h\geq0$, all $m\geq3$;

{\bf PR7(a-e).} $\Fi_{22}$, $\Fi_{23}$, $\Fi_{24}$, $\Pom^+_8(2):\Sym(3)$, $\Pom^+_8(3):\Sym(3)$;

{\bf PR8.} $4^{\bullet h}:(3\cdot{}^+\Omega_6^-(3))$, all $h\geq1$;

{\bf PR9.} $3^{\bullet h}:(2\times \Sp_6(2))$, all $h\geq1$;

{\bf PR10.} $3^{\bullet h}:(2\cdot \Ort_8^+(2))$, all $h\geq1$;

{\bf PR11.} $3^{\bullet 2h}:(2\times \SU_5(2))$, all $h\geq1$;

{\bf PR12.} $3^{\bullet 2h}:4^{\bullet1}:\SU_3(2)'$, all $h\geq1$.
\end{theorem}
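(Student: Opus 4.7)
The plan is to prove this classification in two stages, following the historical development: first recover Fischer's classification of the ``center-free'' case $O_2(G)O_3(G)\le Z(G)$, and then treat general $G$ by analyzing the normal $p$-subgroups $O_p(G)$ for $p\in\{2,3\}$.

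For the Fischer stage, I would argue by induction on $|D|$ using the local structure at a fixed transposition $d\in D$. By Lemma~\ref{l:inherit}(i), the set $D_d=C_D(d)\setminus\{d\}$ induces a $3$-transposition structure on the ``residue'' $C_G(d)/\langle d\rangle$, to which induction applies. Combining this with a brute-force analysis of the rank-$2$ subgroups $\langle d,e\rangle$ (which must be $\Sym(2)$, $\Sym(2)\times\Sym(2)$, or $\Sym(3)$) and of the rank-$3$ subgroups generated by three $3$-transpositions, one extracts strong combinatorial constraints on the diagram $(D)$. These ``triangle/commuting'' conditions match precisely the diagrams of transpositions in symmetric groups, of symplectic and orthogonal transvections in classical groups over $\mathbb{F}_2$, of unitary transvections in $\SU_m(2)'$, of reflections in orthogonal groups over $\mathbb{F}_3$, and of the five Fischer sporadic diagrams. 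One then invokes standard recognition theorems for these groups to identify $G/Z(G)$.

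For the Cuypers--Hall stage, I would analyse a general finite connected $(G,D)$ via the filtration by $O_p(G)$ for $p\in\{2,3\}$ (and a mixed $2$-then-$3$ filtration in the $4^{\bullet h}$ cases). The quotient $\bar G=G/O_p(G)$ is again a connected $3$-transposition group with $O_p(\bar G)=1$, so iterating reduces it to the Fischer list. The module $A_p=O_p(G)/(Z(G)\cap O_p(G))$ is elementary abelian with an action of $\bar G$; the $3$-transposition relations force $A_p$ to be a specific $\mathbb{F}_p\bar G$-module compatible with the diagram, and the parameter $h$ equals the $\mathbb{F}_p$-dimension of the orbit of a fixed lift of $d$ under translation by $A_p$. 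Each of the listed extensions is then realized either inside a classical group over $\mathbb{F}_p$ of larger degree or by direct construction for the sporadic cases, which gives the ``realizability'' clause. Finally, uniqueness of $D$ up to $\operatorname{Aut}(G)$ follows because, once $G$ is identified, the image of $D$ in $G/Z(G)$ is one of the prescribed Fischer classes, and by Lemma~\ref{l:inherit}(iii) distinct elements of $D$ project to distinct cosets of $Z(G)$, so $D$ is determined by that image.

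The main obstacle is unquestionably the Fischer stage: one must distinguish the $\mathbb{F}_2$-classical, $\mathbb{F}_3$-orthogonal, and sporadic diagrams from each other and rule out hypothetical exotic families. The rank-$3$ analysis is delicate, since a triple of $3$-transpositions must be shown to generate nothing outside the allowed list, and the local constraint has to be propagated to global constraints on $(D)$ using that every connected subdiagram corresponds to a Coxeter-type quotient. This is the content of Fischer's long memoir \cite{F}, and substantially shortening it seems out of reach. The module-theoretic analysis in the Cuypers--Hall stage is comparatively mechanical once the center-free list is in hand, although determining the precise admissible $\mathbb{F}_p\bar G$-modules (especially in the $4^{\bullet h}$ cases PR2(d), PR6, PR8, PR12) still requires a nontrivial cohomological computation or explicit construction inside an ambient classical group.
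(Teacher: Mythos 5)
The paper does not prove this statement: it is imported verbatim as \cite[Theorem~5.3]{Spectra21}, which in turn rests on Fischer's memoir \cite{F} and the Cuypers--Hall classification \cite{CH95}. So there is no in-paper argument to measure yours against, and your proposal has to stand on its own as a proof --- which it does not. What you have written is an accurate roadmap of how the classification is proved in the literature (induction on $|D|$ via the local subgroup generated by $D_d$, recognition of the centre-free groups, then a module-theoretic analysis of $O_p(G)$), but every load-bearing step is asserted rather than carried out. The decisive gap is in your ``Fischer stage'': the passage from rank-$2$ and rank-$3$ local constraints on the diagram $(D)$ to the conclusion that $(D)$ is one of the classical, unitary, $\mathbb{F}_3$-orthogonal, or five sporadic diagrams \emph{is} Fischer's theorem, and nothing in your sketch excludes an exotic connected diagram satisfying all the local conditions you list; you concede as much when you say that shortening \cite{F} ``seems out of reach.'' Invoking ``standard recognition theorems'' presupposes that the global diagram has already been pinned down, which is exactly what has not been done.

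The Cuypers--Hall stage has gaps of the same kind. The claim that the $3$-transposition relations ``force $A_p$ to be a specific $\mathbb{F}_p\bar G$-module'' is, for each $\bar G$ on the Fischer list, precisely the theorem to be proved; you give no argument for the fine structure of the list --- for instance why only $h=1$ can occur in the $2^{\bullet 1}$ and $3^{\bullet 1}$ layers of {\bf PR2(c)} and {\bf PR2(d)}, why the exponent must be even in {\bf PR11} and {\bf PR12}, or why {\bf PR8}--{\bf PR12} exist only over those particular small quotients and not over, say, $\Sp_{2m}(2)$ for $m\geq 4$. Finally, the uniqueness clause needs more than Lemma~\ref{l:inherit}(iii): knowing that $D$ is recovered from its image in $G/Z(G)$ does not show that any two generating normal sets of $3$-transpositions in the same $G$ are fused by $\operatorname{Aut}(G)$; that requires case-by-case control of the involution classes and of $\operatorname{Out}(G)$. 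As a guide to the literature your outline is sound; as a proof it is a skeleton with the essential bones missing, and the only defensible way to use this statement in the present paper is the way the authors do --- as a citation.
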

\begin{rmk}
The notation {\bf PRk} comes from~{\em\cite{Spectra21}},
here $P$ means {\bf P}arabolic and $R$ means {\bf R}eflections.
These abbreviations reflect how the groups arise in the classification.
\end{rmk}

In Theorem~\ref{t:classification}, we follow notation from~\cite{Spectra21}, in particular  $A:B$ means a split group extension with normal subgroup $A$, while $A\cdot B$ is a nonsplit group extension with normal subgroup $A$ and quotient $B$. We write $AB$ indicating that $A$ is a normal subgroup while $B$ is the quotient, but the extension may or may not be split.

%\section{Spectra of the diagrams}\label{sec:diagram}

Let $V$ be a nonempty set and $(V)$ a graph with $V$ as vertex set.
The $(0,1)$-adjacency matrix of the graph will be denoted $AMat((V))$, and the spectrum of the graph
is the (ordered) spectrum of $AMat((X))$:
$Spec((X))=((\ldots,r_i,\ldots))$.

Suppose that $(G,D)$ is a connected 3-transposition group.
Hall and Shpectorov found in \cite{Spectra21} spectrum of the diagram $(D)$ in all cases of Theorem~\ref{t:classification}. Before formulating their result, it is necessary to introduce some notation and conventions.

Clearly, the all-one vector 1 is an eigenvector of $AMat((V))$ with eigenvalue $k$ if and only if $(V)$ is regular of
degree $k$. If $(V)$ is connected, then the Perron--Frobenius Theorem
implies that $k$ is the largest eigenvalue and the corresponding
eigenspace has dimension one. Following~\cite{Spectra21}, we list $k$ first in the spectrum and separate it from the rest of eigenvalues by a semicolon. We use the convention that $[t]^c$ indicates an eigenvalue $t$ of multiplicity $c$ and $[t]^\ast$ means that the eigenvalue $t$ has multiplicity such that the total multiplicity of all eigenvalues is equal to the size of $V$.

\begin{theorem}\label{t:spectrum}{\em\cite{Spectra21}}
Let $(G,D)$ be a finite 3-transposition group from the conclusion of
Theorem~\ref{t:classification}.
Then the size of $(D)$ and its spectrum are as in the second and third columns of Table~\ref{tab:spectra}, respectively.
\end{theorem}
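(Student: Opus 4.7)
The plan is to exploit the $G$-invariance of the adjacency matrix $A=AMat((D))$ and reduce each of the twelve cases in Theorem~\ref{t:classification} to a character-theoretic computation. Since $(G,D)$ is connected, $D$ is a single conjugacy class of $G$, so $G$ acts transitively on $D$ by conjugation and $A$ lies in the centralizer algebra $\operatorname{End}_G(\mathbb{C}D)$. The first step is to verify, in each family, that the permutation module $\mathbb{C}D=\operatorname{Ind}_{C_G(d)}^G \mathbf{1}$ is multiplicity-free: for $\Sym(m)$ and for the classical groups acting on transvections or reflections this is a rank-$3$ representation, and for the sporadic groups of PR7 it is multiplicity-free from the known sub-degree decompositions. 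Multiplicity-freeness together with Schur's lemma then forces $A$ to act as a scalar on each $G$-irreducible constituent of $\mathbb{C}D$, and this scalar can be read off from the character values of $G$ on the conjugates of $d$ that are joined to $d$ in $(D)$.

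Second, I would settle the base families (those with $h=0$, or with $h$ minimal). For $\Sym(m)$ the diagram $(D)$ is the triangular graph $J(m,2)$, whose spectrum $(2(m-2);\,[m-4]^{m-1},\,[-2]^{m(m-3)/2})$ is classical. For $\Sp_{2m}(2)$, $\Ort_{2m}^{\pm}(2)$, and $\SU_m(2)'$ acting on their transvections, the associated diagram is a well-known strongly regular graph whose parameters (and hence spectrum) can be extracted directly from the natural module. For ${}^+\Omega^{\epsilon}_{m}(3)$ the diagram is the standard graph on nonisotropic $1$-spaces of a quadric over $\mathbb{F}_3$, again a classical rank-three graph. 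For the five Fischer-type sporadic groups in PR7 the eigenvalues are read off from the published character tables via the scalar formula above (in practice via \textsc{gap} or the \textsc{atlas}).

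Third, I would handle the covers $p^{\bullet h}$. Each such cover arises from a normal $p$-subgroup $N$ on which $D$ projects to a smaller 3-transposition set $\bar D$ with fibres of size $p^h$. For adjacent $\bar d,\bar e\in\bar D$, the restriction of the ambient adjacency to the corresponding $p^h\times p^h$ block is determined by the structure of the preimage of $\langle\bar d,\bar e\rangle$ in $G$ and depends only on a cocycle class. Writing the extended adjacency matrix as a tensor product $\bar A\otimes X$ for a fibre-pattern matrix $X$ then expresses the new spectrum as products of eigenvalues of $\bar A$ with those of $X$, together with a contribution coming from the $G$-constituents of $\mathbb{C}D$ that vanish on $\mathbb{C}\bar D$, which one computes from the action of $N$.

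The main obstacle I foresee is the mixed-characteristic covers PR8--PR12, where a $3$- or $4$-group is glued to a Lie-type quotient of a different characteristic. There the fibre-pattern matrix $X$ depends intricately on cocycle data, several distinct types of pairs $(\bar d,\bar e)$ may occur, and the Hecke algebra need not be commutative a priori; matching the resulting eigenvalues and multiplicities with those tabulated in Table~\ref{tab:spectra} is where the bulk of the work lies. The PR7 entries are conceptually easier but computationally bulky, and are best handled by direct manipulation of the character tables.
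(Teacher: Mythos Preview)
The paper does not prove Theorem~\ref{t:spectrum} at all: it is quoted verbatim from Hall--Shpectorov~\cite{Spectra21}, with the table of sizes and spectra reproduced for the reader's convenience, and no argument is supplied. So there is no ``paper's own proof'' to compare your proposal against; your sketch is an attempt to reconstruct the content of~\cite{Spectra21}, not of the present paper.

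That said, a couple of remarks on your outline. Your claim that $\mathbb{C}D$ is multiplicity-free (rank~$3$) fails in several of the listed families: for instance PR7(d) and PR7(e) have four distinct eigenvalues in Table~\ref{tab:spectra}, so the permutation rank there is at least~$4$, and the PR2(c), PR2(d) families likewise have four eigenvalues. This is not fatal---the centralizer algebra is still commutative as long as the permutation character is multiplicity-free, which is weaker than rank~$3$---but you would need to check multiplicity-freeness directly rather than assert rank~$3$. Your treatment of the $p^{\bullet h}$ covers via a tensor decomposition $\bar A\otimes X$ is also too optimistic: the adjacency structure over a fibre pair depends on whether the base pair is an edge or a non-edge, so one gets a block structure governed by $\bar A$ rather than a clean Kronecker product, and the resulting eigenvalue shifts are additive (compare the $-1$ shifts visible throughout the $3^{\bullet h}$ rows of Table~\ref{tab:spectra}) rather than multiplicative. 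The actual argument in~\cite{Spectra21} proceeds via an equitable-partition/quotient-matrix mechanism for the covers, which handles these shifts uniformly.
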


\begin{table}\caption{Spectra of diagrams}\label{tab:spectra}
\begin{tabular}{| c | c| c|}
\hline
Label & Size & Spectrum \\ \hline
{\bf PR1} & $3^h$ &$((3^h-1;[-1]^{3^h-1}))$ \\
{\bf PR2(a)} & $2^{h-1}m(m-1) $ & $((2^{h+1}(m-2);[2^h(m-4)]^{m-1},[0]^\ast,$ \\
              & & $[-2^{h+1}]^{m(m-3)/2} ))$\\
{\bf PR2(b)} & $3^hm(m-1)/2$ & $((3^h(2m-3)-1; [3^h(m-3)-1]^{m-1}, [-1]^\ast,$ \\
              & & $[-3^h-1]^{m(m-3)/2} ))$ \\
{\bf PR2(c)} & $3^hm(m-1)$ & $((3^h(4m-7)-1; [3^h(2m-7)-1]^{m-1}, [3^h-1]^{m(m-1)/2}, $ \\
             & & $[-1]^\ast, [-3^{h+1}-1]^{m(m-3)/2} ))$ \\
{\bf PR2(d)} & $3(2^{2h-1})m(m-1)$ & $((4^h(6m-10); [4^h(3m-10)]^{m-1}, [0]^\ast, $ \\
              &  & $[-4^h]^{m(m-1)}, [-4^{h+1}]^{m(m-3)/2}))$ \\
{\bf PR3} $\epsilon=+$ & $2^h(2^{2m-1}-2^{m-1})$ & $((2^h(2^{2m-2}-2^{m-1}); [2^{h+m-1}]^{(2^m-1)(2^{m-1}-1)/3},$ \\
             & & $ [0]^\ast,[-2^{h+m-2}]^{(2^{2m}-4)/3}))$ \\
$\epsilon=-$ & $2^h(2^{2m-1}+2^{m-1})$ & $((2^h(2^{2m-2}+2^{m-1}); [2^{h+m-2}]^{(2^{2m}-4)/3},$ \\
             & & $[0]^\ast,[-2^{h+m-1}]^{(2^m+1)(2^{m-1}+1)/3}))$ \\
{\bf PR4} & $2^h(2^{2m}-1)$ & $((2^{2m-1+h}; [2^{m-1+h}]^{2^{2m-1}-2^{m-1}-1},$ \\
   & & $[0]^\ast, [-2^{h+m-1}]^{2^{2m-1}+2^{m-1}-1}))$ \\
{\bf PR5}  & & \\
odd $m\geq5,$  & $3^h(3^{m-1}-3^{(m-1)/2})/2$ & $((3^h(3^{m-2}-2\cdot3^{(m-3)/2})-1;[3^{(m-3)/2+h}-1]^f,$ \\
$\epsilon=+$ &  & $[-1]^\ast, [-3^{(m-3)/2+h}-1]^g))$  \\
    & & for $f=(3^{m-1}-1)/4$ \\
    & & and $g=(3^{m-1}-1-2(3^{(m-1)/2}+1))/4$ \\
odd $m\geq5,$  & $3^h(3^{m-1}+3^{(m-1)/2})/2$ & $((3^h(3^{m-2}+2\cdot3^{(m-3)/2})-1;[3^{(m-3)/2+h}-1]^f,$ \\
 $\epsilon=-$ &  & $[-1]^\ast, [-3^{(m-3)/2+h}-1]^g))$  \\
   & & for $f=(3^{m-1}-1+2(3^{(m-1)/2}-1))/4$ \\
   & & and $g=(3^{m-1}-1)/4$ \\
even $m\geq6$, & $3^h(3^{m-1}-3^{(m-2)/2})/2$ & $((3^{m-2+h}-1;[3^{(m-4)/2+h}-1]^f,$  \\
$\epsilon=+$  &  & $[-1]^\ast,[-3^{(m-2)/2+h}-1]^g))$ \\
              & & for $f=(3^m-9)/8$ \\
              & & and $g=(3^{m/2}-1)(3^{(m-2)/2}-1)/8$ \\
even $m\geq6$, & $3^h(3^{m-1}+3^{(m-2)/2})/2$ & $((3^{m-2+h}-1;[3^{(m-2)/2+h}-1]^f,$ \\
$\epsilon=-$  & &  $[-1]^\ast,[-3^{(m-4)/2+h}-1]^g))$ \\
               & & for $f=(3^{m/2}+1)(3^{(m-2)/2}+1)/8$ \\
               & & and $g=(3^m-9)/8$ \\
{\bf PR6}  & & \\
even $m\geq 4$  & $4^h(2^{2m-1}+2^{m-1}-1)/3$ & $((2^{2h+2m-3};[2^{2h+m-3}]^f,[0]^\ast,[-2^{2h+m-2}]^g))$ \\
      & &  for $f=8(2^{2m-3}-2^{m-2}-1)/9$ \\
      & & and $g=4(2^{2m-3}+7(2^{m-3})-1)/9$ \\
odd $m\geq 3$  & $4^h(2^{2m-1}-2^{m-1}-1)/3$ & $((2^{2h+2m-3}; [2^{2h+m-2}]^f,[0]^\ast, [-2^{2h+m-3}]^g))$ \\
 & & for $f=4(2^{2m-3}-7(2^{m-3})-1)/9$ \\
 & &  and $g=8(2^{2m-3}+2^{m-2}-1)/9$ \\
{\bf PR7(a)} & $3510$ & $((2816; [8]^{3080}, [-64]^{429} ))$ \\
{\bf PR7(b)} & $31671$ & $((28160; [8]^{30888}, [-352]^{722} ))$ \\
{\bf PR7(c)} & $306936$ & $((275264; [80]^{249458}, [-352]^{57477} ))$ \\
{\bf PR7(d)} & $360$ & $((296; [8]^{105}, [-4]^{252}, [-64]^2))$ \\
{\bf PR7(e)} & $3240$ & $((2888; [8]^{2457}, [-28]^{780}, [-352]^2))$ \\
{\bf PR8} & $126\cdot 4^h$ & $((5\cdot 4^{h+2}; [2^{2h+3}]^{35}, [0]^{\ast}, [-4^{h+1}]^{90}))$ \\
{\bf PR9} & $63\cdot3^h$ & $((11\cdot 3^{h+1}-1; [5\cdot3^{h}-1]^{27}, [-1]^{\ast}, [-3^{h+1}-1]^{35}))$ \\
{\bf PR10} & $120\cdot3^h$ & $((19\cdot 3^{h+1}-1; [3^{h+2}-1]^{35}, [-1]^{\ast}, [-3^{h+1}-1]^{84}))$ \\
{\bf PR11} & $165\cdot3^{2h}$ & $((43\cdot 3^{2h+1}-1; [3^{2h+2}-1]^{44}, [-1]^{\ast}, [-3^{2h+1}-1]^{120}))$ \\
{\bf PR12} & $36\cdot3^{2h}$ & $((11\cdot 3^{2h+1}-1; [3^{2h}-1]^{27}, [-1]^{\ast}, [-3^{2h+1}-1]^{8}))$ \\
 \hline

\end{tabular}
\end{table}

Before finishing this section, we introduce an alternative view of the elements of the set of 3-transpositions. The \emph{Fischer space} of a 3-transposition group
$(G,D)$ is a point-line geometry $\Gamma(G,D)$, whose point set is $D$
and where distinct points $c$ and $d$ are collinear if and only if $|cd|=3$.
Observe that any two collinear points $c$ and $d$ lie in a
unique common line, which consists of $c$, $d$, and the third point
$e=c^d=d^c$. It follows from the definition that the connected components of the Fischer space coincide with the conjugacy classes of $G$ contained in $D$. In particular, the Fischer space is connected if and only if the diagram $(D)$ is connected.

\section{Preliminaries: Jordan and Matsuo algebras}\label{sec:matsuo}

Throughout this section we assume that $\mathbb{F}$ is a field of characteristic not two.
Recall that a commutative $\mathbb{F}$-algebra $J$ is called Jordan if any two of its elements $x$ and $y$ satisfy the identity $(x^2y)x=x^2(yx)$. If $x,y,z$ are three elements in an $\mathbb{F}$-algebra, then their associator is $(x, y, z):=(xy)z-x(yz)$.
The associator is convenient when writing identities, for example the Jordan identity $(x^2y)x-x^2(yx)=0$ can be rewritten as $(x^2,y,x)=0$.
To show that an algebra is Jordan we will use the linearized Jordan identity.
\begin{lem}{\em\cite[Proposition~1.8.5(1)]{MC}}\label{Jordan-id}
Let $\mathbb{F}$ be a field of characteristic not two and three.
Then a commutative $\mathbb{F}$-algebra $J$ is a Jordan algebra if and only if $(xz, y, w) + (zw, y, x) + (wx, y, z) = 0$ for all elements $x, y, z, w$ in $J$.
\end{lem}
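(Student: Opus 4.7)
The content of the lemma is that the Jordan identity $(x^2,y,x)=0$ and its full linearization are equivalent under the stated characteristic assumption. My plan is to prove both implications by elementary substitutions, using the trilinearity of the associator and the commutativity of $J$.

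The reverse direction is immediate: setting $z=w=x$ in the linearized identity collapses each of the three associator terms to $(x^2,y,x)$, giving $3(x^2,y,x)=0$; since $\operatorname{char}\mathbb{F}\neq 3$, this yields the Jordan identity.

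For the forward direction I plan to linearize in two stages. First, I substitute $x\mapsto x+w$ into $(x^2,y,x)=0$. By commutativity, $(x+w)^2=x^2+2xw+w^2$, so after expanding by trilinearity and canceling the two terms $(x^2,y,x)$ and $(w^2,y,w)$ (both zero by assumption), I obtain the intermediate relation $(x^2,y,w)+2(xw,y,x)+2(xw,y,w)+(w^2,y,x)=0$. Next, I substitute $x\mapsto x+z$ into this intermediate relation and expand again. Subtracting the same relation applied to the pair $(x,w)$ and then to the pair $(z,w)$ cancels every term that involves only two of the four variables, leaving $2[(xz,y,w)+(xw,y,z)+(zw,y,x)]=0$. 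Dividing by $2$, which is allowed since $\operatorname{char}\mathbb{F}\neq 2$, produces the claimed identity (after noting $(xw,y,z)=(wx,y,z)$ by commutativity).

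There is no conceptual obstacle; the only care required is bookkeeping in the two expansions. The two characteristic restrictions arise exactly at the division by $2$ in the forward direction and the division by $3$ in the reverse direction, so both hypotheses on $\operatorname{char}\mathbb{F}$ are genuinely used.
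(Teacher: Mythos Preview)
Your argument is correct: the reverse direction is the trivial specialization $z=w=x$, and your two-step linearization in the forward direction is exactly the standard polarization argument, with the bookkeeping checking out. Note, however, that the paper does not give its own proof of this lemma at all; it simply cites \cite[Proposition~1.8.5(1)]{MC} and moves on, so there is nothing to compare against beyond observing that your proof is the expected one.
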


Suppose that $A$ is an $\mathbb{F}$-algebra and $a\in A$. For an element $\lambda\in\mathbb{F}$ denote by $A_\lambda(a)$ the $\lambda$-eigenspace of the (left) adjoint operator of $a$: $A_\lambda(a)=\{ b\in A~|~ab=\lambda b \}$.

\begin{lem}{\em (Peirce decomposition)}\label{l:peirce}
Suppose that $e$ is an idempotent in a Jordan algebra $J$.
Then the following statements hold.
\begin{enumerate}
\item[$(i)$] $J=J_1(e)\oplus J_0(e)\oplus J_{1/2}(e)$;
 \item[$(ii)$] $J_1(e)+J_0(e)$ is a subalgebra in $J$ and, moreover,
 $J_1(e)^2\subseteq J_1(e)$, $J_0(e)^2\subseteq J_0(e)$, and $J_1(e)J_0(e)=(0)$;
 \item[$(iii)$] $J_{1/2}(e)^2\subseteq J_0(e)+J_1(e)$ and $J_{1/2}(e)(J_0(e)+J_1(e))\subseteq J_{1/2}(e)$.
\end{enumerate}

\end{lem}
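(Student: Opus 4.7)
The plan is to work with the operator form of the linearized Jordan identity of Lemma~\ref{Jordan-id}. Writing $L_a$ for the left (equivalently, right) multiplication by $a$ and viewing $(xz,y,w)+(zw,y,x)+(wx,y,z)=0$ as a statement about operators acting on the slot $y$, one obtains
\begin{equation*}
[L_w,L_{xz}]+[L_x,L_{zw}]+[L_z,L_{wx}]=0 \qquad \text{for all } x,z,w\in J.
\end{equation*}
Setting $x=e$ gives the \emph{workhorse identity} $[L_w,L_{ez}]+[L_e,L_{zw}]+[L_z,L_{ew}]=0$, and everything else falls out from specializations of it.

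To prove (i), I take $z=e$ in the workhorse identity, obtaining $[L_w,L_e]=2[L_{ew},L_e]$, and then evaluate both sides at $e$. Expanding the commutators using $e^2=e$ and commutativity produces the cubic operator relation $2L_e^3-3L_e^2+L_e=0$, which factors as $L_e(L_e-I)(L_e-\tfrac{1}{2}I)=0$. Because $\operatorname{char}\mathbb{F}\neq 2$ its three roots $0,\tfrac{1}{2},1$ are distinct, hence the minimal polynomial of $L_e$ is a product of distinct linear factors, $L_e$ is diagonalisable, and $J=J_1(e)\oplus J_0(e)\oplus J_{1/2}(e)$.

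For (ii) and the inclusion $J_{1/2}(e)(J_0(e)+J_1(e))\subseteq J_{1/2}(e)$ in (iii), I reuse the relation $[L_w,L_e]=2[L_{ew},L_e]$ together with (i). If $w\in J_\lambda(e)$ with $\lambda\in\{0,1\}$, then $L_{ew}=\lambda L_w$, so the identity becomes $(1-2\lambda)[L_w,L_e]=0$, which forces $L_w$ to commute with $L_e$. Consequently, for $u\in J_\mu(e)$ with any $\mu\in\{0,\tfrac{1}{2},1\}$ one has $e(wu)=w(eu)=\mu(wu)$, proving $J_\lambda(e)\cdot J_\mu(e)\subseteq J_\mu(e)$ whenever $\lambda\in\{0,1\}$. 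The cases $(\lambda,\mu)=(1,1)$ and $(0,0)$ give the first two inclusions of (ii); combining $(\lambda,\mu)=(1,0)$ with the swapped pair $(0,1)$ and invoking commutativity places $J_1(e)J_0(e)$ inside $J_0(e)\cap J_1(e)=0$; and the case $\mu=\tfrac{1}{2}$ provides the desired piece of (iii).

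The only genuinely new calculation is $J_{1/2}(e)^2\subseteq J_0(e)+J_1(e)$. For $z,w\in J_{1/2}(e)$ the workhorse identity specialises via $L_{ez}=\tfrac{1}{2}L_z$ and $L_{ew}=\tfrac{1}{2}L_w$ to
\begin{equation*}
\tfrac{1}{2}[L_w,L_z]+[L_e,L_{zw}]+\tfrac{1}{2}[L_z,L_w]=0,
\end{equation*}
and the two outer commutators cancel, yielding $[L_e,L_{zw}]=0$. Thus $L_{zw}$ preserves every $L_e$-eigenspace; applying it to $e\in J_1(e)$ shows $e(zw)=(zw)e\in J_1(e)$, i.e.\ $L_e(zw)=L_e^2(zw)$. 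Decomposing $zw=(zw)_0+(zw)_{1/2}+(zw)_1$ via (i) and comparing the $\tfrac{1}{2}$-components of $L_e(zw)$ and $L_e^2(zw)$ immediately forces $(zw)_{1/2}=0$. The main obstacle in the whole argument is essentially just the clean derivation of the cubic identity and careful tracking of commutators; once both operator identities are in hand, the Peirce rules drop out mechanically.
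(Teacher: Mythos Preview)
The paper does not actually prove Lemma~\ref{l:peirce}; it is stated as a classical preliminary (the Peirce decomposition is standard, e.g.\ in McCrimmon's book~\cite{MC}), so there is nothing to compare against. Your argument is correct and is essentially the textbook operator-theoretic derivation: the cubic relation $2L_e^3-3L_e^2+L_e=0$ gives the eigenspace decomposition, and the commutator identities handle the multiplication rules.

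One small point worth tightening: you invoke Lemma~\ref{Jordan-id}, which in this paper is stated under the hypothesis $\operatorname{char}\mathbb{F}\neq 2,3$, whereas Section~\ref{sec:matsuo} assumes only $\operatorname{char}\mathbb{F}\neq 2$. The implication you actually use (a Jordan algebra satisfies the linearized identity) follows by polarizing $(x^2,y,x)=0$ and requires only division by $2$, so your proof is valid in the generality claimed; but as written your citation formally imports an unnecessary restriction on the characteristic. Either note that only the forward direction of Lemma~\ref{Jordan-id} is needed and that it holds for $\operatorname{char}\neq 2$, or derive the operator identity directly from the Jordan identity.
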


Suppose that $\eta\in\mathbb{F}$ and $\eta\neq0,1$.
Fix a 3-transposition group $(G,D)$. The \emph{Matsuo algebra} $M_\eta(G,D)$ over $\mathbb{F}$, corresponding to $(G,D)$ and $\eta$, has the point set $D$ as its basis. Multiplication is defined
on $D$ as follows:
$$c\cdot d=\left\{
\begin{array}{rl}
c,&\mbox{if }c=d;\\
0,&\mbox{if }|cd|=2;\\
\frac{\eta}{2}(c+d-e),&\mbox{if }|cd|=3\mbox{ and }e=c^d=d^c.
\end{array}
\right.$$
We use the dot for the algebra product to distinguish it from
the multiplication in the group $G$.
Matsuo algebras $M_\eta(G,D)$ generalize Jordan algebras in the following way:
the assertions of Lemma~\ref{l:peirce} hold for any $e\in D$ if we replace $J$ with $M_\eta(G,D)$ and $\frac{1}{2}$ with $\eta$. This means that Matsuo algebras are examples of axial algebras of Jordan type $\eta$ (see \cite[Theorem 6.2]{hrs} for details).

The Matsuo algebra $M=M_\eta(G,D)$ admits a bilinear symmetric form $(\cdot,\cdot)$ that associates with the algebra product, i.e. $(u\cdot v,w)=(u,v\cdot w)$ for arbitrary algebra elements $u$, $v$, and $w$.
This form is given on the basis $D$  by the following:
$$
(c,d)=\left\{
\begin{array}{rl}
1,&\mbox{ if }c=d;\\
0,&\mbox{ if }|cd|=2;\\
\frac{\eta}{2},&\mbox{ if }|cd|=3.
\end{array}\right.
$$

The radical $M^\perp$ of the form is the set of elements orthogonal to $M$:
$$M^\perp=\{u\in M\mid (u,v)=0\mbox{ for all }v\in M\}.$$
Since the form associates with the algebra product, $M^\perp$ is an ideal in $M$.
It turns out that in many situations the radical includes all proper ideals of $M$.
\begin{prop}[\cite{kms}]\label{p:summary}
Suppose that $M=M_\eta(G,D)$ is a Matsuo algebra.
If the diagram $(D)$ is connected, then each ideal of $M$ lies in the radical $M^\perp$.
\end{prop}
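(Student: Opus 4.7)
\medskip

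\noindent\emph{Proof proposal.} The plan is to assume that $I$ is a proper ideal of $M$ and deduce that $(u,d)=0$ for every $u\in I$ and every $d\in D$. The key observation is that, for a fixed axis $d\in D$, the coefficient of $d$ in the Peirce-like decomposition of any element $u$ coincides with $(u,d)$, and this coefficient can be isolated inside $I$ via the adjoint action of $d$.

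Fix $u\in I$ and $d\in D$, and write $u=\alpha d+u_0+u_\eta$ with $u_0\in M_0(d)$ and $u_\eta\in M_\eta(d)$. Since $I$ is an ideal, both $d\cdot u=\alpha d+\eta u_\eta$ and $d\cdot(d\cdot u)=\alpha d+\eta^2 u_\eta$ lie in $I$, and since $\eta(\eta-1)\neq 0$ one extracts $u_\eta\in I$, then $\alpha d\in I$, and hence $u_0\in I$. Associativity of the bilinear form with the algebra product together with $d\cdot d=d$ gives $(u_0,d)=(u_0,d\cdot d)=(d\cdot u_0,d)=0$ and $(u_\eta,d)=(d\cdot u_\eta,d)=\eta(u_\eta,d)$, forcing $(u_\eta,d)=0$. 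Therefore $(u,d)=\alpha$, so it suffices to prove that $\alpha\neq 0$ for some pair $(u,d)$ would force $I=M$.

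Assume $d\in I$ for some $d\in D$. For any $c\in D$ with $|cd|=3$, let $e=c^d$ be the third point of the Fischer line through $c$ and $d$. Then $d\cdot c=\tfrac{\eta}{2}(c+d-e)\in I$ combined with $d\in I$ yields $c-e\in I$. Next, using that the third point of the line $\{c,e,d\}$ is $d$, compute
\[
c\cdot(c-e)=c-\tfrac{\eta}{2}(c+e-d)=(1-\tfrac{\eta}{2})c-\tfrac{\eta}{2}e+\tfrac{\eta}{2}d\in I;
\]
subtracting $\tfrac{\eta}{2}d\in I$ and $\tfrac{\eta}{2}(c-e)\in I$ leaves $(1-\eta)c\in I$, so $c\in I$ and $e=c-(c-e)\in I$. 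Since $(D)$ is connected in the adjacency $|cd|=3$, iterating this propagation along Fischer lines places every element of $D$ in $I$, so $I=M$, contradicting properness.

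Putting the pieces together: if $I$ is proper, then $\alpha=0$ in every such decomposition, i.e.\ $(u,d)=0$ for all $u\in I$ and $d\in D$, which is exactly $I\subseteq M^\perp$. I expect the only real technical point to be the third paragraph, where one must verify that the product relations on a Fischer line are exactly what is needed to peel off $c$, $d$, and $e$ one by one; everything else is formal manipulation of the eigenspace decomposition and the associativity of the invariant form.
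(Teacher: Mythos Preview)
Your argument is correct. The paper itself does not supply a proof of this proposition; it simply quotes the result from \cite{kms}, so there is nothing to compare against in the present paper. What you have written is essentially the standard proof from \cite{kms} specialized to Matsuo algebras: use the $\{1,0,\eta\}$-eigenspace decomposition at an axis $d$ to isolate the $1$-component of any $u\in I$, identify that component with $(u,d)$ via associativity of the Frobenius form, and then show that a proper ideal cannot contain an axis because connectedness of $(D)$ propagates membership along Fischer lines. One minor remark: the proposition as stated in the paper says ``each ideal'', which in context means each proper ideal (since $(d,d)=1$ forces $M^\perp\neq M$); you implicitly use this when you assume $I\neq M$ at the outset, and it would do no harm to say so explicitly.
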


Following \cite{hrs}, we call this form {\it the Frobenius form} of $M_\eta(G,D)$.

\begin{lem}\footnote{This lemma was mentioned by S.~Shpectorov in his talk at Axial seminar, 12/10/21, \url{https://sites.google.com/view/axial-algebras/home}}\label{l:eigenvalues} Let $\mathbb{F}$ be a field of characteristic $0$ and $\eta\in\mathbb{F}\setminus\{0,1\}$. Suppose that $(G,D)$ is a $3$-transposition group and $M=M_\eta(G,D)$ is the Matsuo algebra for $(G,D)$. Fix some order of elements of $D$ and denote by
$\mathcal{M}$ the Gram matrix of the Frobenius form of $M$ with respect to $D$
and by $\mathcal{A}$ the adjacency matrix $AMat((D))$. Then $\zeta$ is an eigenvalue of $\mathcal{A}$ with multiplicity $k$ if and only if $1+\frac{\eta}{2}\zeta$ is an eigenvalue of $\mathcal{M}$
with multiplicity~$k$.
\end{lem}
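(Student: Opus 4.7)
The plan is to prove this by writing both matrices explicitly on the ordered basis $D$ and observing that they differ by a simple affine relation.

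First I would note that from the definition of the Frobenius form on $M=M_\eta(G,D)$, the entries of the Gram matrix $\mathcal{M}$ with respect to $D$ are exactly
\[
\mathcal{M}_{cd}=\begin{cases} 1, & c=d,\\ 0, & |cd|=2,\\ \tfrac{\eta}{2}, & |cd|=3,\end{cases}
\]
while the adjacency matrix $\mathcal{A}=AMat((D))$ is $1$ on pairs with $|cd|=3$, and $0$ elsewhere (including the diagonal). Comparing entry by entry yields the identity
\[
\mathcal{M}=I+\tfrac{\eta}{2}\mathcal{A},
\]
where $I$ is the identity matrix of size $|D|$.

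Since $I$ commutes with every matrix, $\mathcal{M}$ and $\mathcal{A}$ are simultaneously diagonalizable and, more importantly, share the same eigenspaces. If $v$ is a $\zeta$-eigenvector of $\mathcal{A}$, then
\[
\mathcal{M}v=v+\tfrac{\eta}{2}\zeta v=\bigl(1+\tfrac{\eta}{2}\zeta\bigr)v,
\]
so $v$ is a $(1+\tfrac{\eta}{2}\zeta)$-eigenvector of $\mathcal{M}$. The assignment $\zeta\mapsto 1+\tfrac{\eta}{2}\zeta$ is an affine bijection of $\mathbb{F}$ (here one uses $\eta\neq 0$), so distinct eigenvalues of $\mathcal{A}$ correspond to distinct eigenvalues of $\mathcal{M}$, and the $\zeta$-eigenspace of $\mathcal{A}$ coincides with the $(1+\tfrac{\eta}{2}\zeta)$-eigenspace of $\mathcal{M}$. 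In particular, the geometric (and hence algebraic, since both matrices are symmetric and thus diagonalizable over $\mathbb{F}$ or an extension) multiplicities agree.

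There is essentially no obstacle here: the statement is a direct translation between the two matrices, and the only thing worth being careful about is the one-to-one correspondence of multiplicities, which follows from the fact that the affine map $\zeta\mapsto 1+\tfrac{\eta}{2}\zeta$ is invertible because $\eta\neq 0$, together with the symmetry of $\mathcal{A}$ (and $\mathcal{M}$), which ensures a full orthogonal eigenspace decomposition.
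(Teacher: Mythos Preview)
Your proof is correct and is in fact more elementary than the paper's. Both arguments start from the same identity $\mathcal{M}=I+\tfrac{\eta}{2}\mathcal{A}$, but the paper then reduces to the connected case and invokes Theorem~\ref{t:spectrum} (which rests on the Cuypers--Hall classification) solely to conclude that all eigenvalues of $\mathcal{A}$ are rational, so that a diagonalizing change of basis exists over $\mathbb{Q}\subseteq\mathbb{F}$. You bypass the classification entirely by observing that the $\zeta$-eigenspace of $\mathcal{A}$ literally equals the $(1+\tfrac{\eta}{2}\zeta)$-eigenspace of $\mathcal{M}$, and that $\zeta\mapsto 1+\tfrac{\eta}{2}\zeta$ is injective because $\eta\neq 0$. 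This is cleaner and does not depend on knowing the spectra in advance.

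One wording issue: the clause ``both matrices are symmetric and thus diagonalizable over $\mathbb{F}$ or an extension'' is not a valid general principle---over $\mathbb{C}$ the symmetric matrix $\begin{pmatrix}1&i\\ i&-1\end{pmatrix}$ is nilpotent. What makes it work here is that $\mathcal{A}$ has \emph{integer} entries, so the real spectral theorem applies and its minimal polynomial (which is the same over $\mathbb{Q}$ and over $\mathbb{F}$) has simple roots; hence $\mathcal{A}$, and therefore $\mathcal{M}=I+\tfrac{\eta}{2}\mathcal{A}$, is diagonalizable over $\overline{\mathbb{F}}$. Alternatively, you can avoid diagonalizability altogether by noting that $\chi_{\mathcal{M}}(\mu)=(\tfrac{\eta}{2})^{|D|}\chi_{\mathcal{A}}\bigl(\tfrac{2(\mu-1)}{\eta}\bigr)$, which matches algebraic multiplicities directly.
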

\begin{proof}
It follows from the definitions of $\mathcal{M}$ and $\mathcal{A}$
that $\mathcal{M}=I+\frac{\eta}{2}\mathcal{A}$.
Write $D=\bigcup\limits_{i=1}^kD_i$ as a disjoint union of $k$ conjugacy classes $D_i$ and denote $G_i=\langle D_i\rangle$, where $1\leq i\leq k$. By \cite[Theorem~6.3]{hrs}, $M$ is isomorphic to the direct product of $M_\eta(G_i,D_i)$. We can assume that the elements in $D$ are arranged by classes, first the class $D_1$, then $D_2$, and so on. Then the matrices $\mathcal{M}$ and $\mathcal{A}$ are block-diagonal. Therefore, we can assume that $(D)$ is connected and hence
$D$ is a single conjugacy class of 3-transpositions.
It follows from the description of the spectrum of $(D)$ in Theorem~\ref{t:spectrum} that all eigenvalues in $Spec((D))$ are rational, so we can find a basis comprising of eigenvectors whose all coordinates are rational. If $T$ is the transformation matrix from $D$ to the new basis, then $T^{-1}\mathcal{A}T$ is a diagonal matrix. Hence
$T^{-1}\mathcal{M}T=I+\frac{\eta}{2}T^{-1}\mathcal{A}T$ is diagonal and the result follows.
\end{proof}

\begin{cor}\label{c:critical_value}
Let $\mathcal{M}$ and $\mathcal{A}$ be as in Lemma~{\em\ref{l:eigenvalues}}.
If $\eta=\frac{1}{2}$, then the multiplicity of $0$ in the spectrum of
$\mathcal{M}$ is equal to that of $-4$ in the spectrum of $\mathcal{A}$.
\end{cor}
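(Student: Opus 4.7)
The plan is to apply Lemma~\ref{l:eigenvalues} directly, specializing the correspondence between the spectra of $\mathcal{A}$ and $\mathcal{M}$ to the value $\eta = \tfrac{1}{2}$. Recall the lemma asserts that $\zeta$ is an eigenvalue of $\mathcal{A}$ with multiplicity $k$ if and only if $1 + \tfrac{\eta}{2}\zeta$ is an eigenvalue of $\mathcal{M}$ with the same multiplicity.

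First I would substitute $\eta = \tfrac{1}{2}$ into the affine relation $\mu = 1 + \tfrac{\eta}{2}\zeta$, obtaining $\mu = 1 + \tfrac{1}{4}\zeta$. Then I would solve $\mu = 0$ for $\zeta$, which gives $\zeta = -4$. By the bijective correspondence of eigenvalues with multiplicities provided by Lemma~\ref{l:eigenvalues}, the multiplicity of $0$ as an eigenvalue of $\mathcal{M}$ equals the multiplicity of $-4$ as an eigenvalue of $\mathcal{A}$.

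There is no real obstacle here: the entire content is encapsulated in the identity $\mathcal{M} = I + \tfrac{\eta}{2}\mathcal{A}$ (which implies the two matrices are simultaneously diagonalizable and their eigenvalues are in the stated affine correspondence), a fact already established in the proof of Lemma~\ref{l:eigenvalues}. The corollary is essentially a one-line specialization, and its purpose is to isolate the particular eigenvalue $-4$ of the adjacency matrix as the critical one to track when $\eta = \tfrac{1}{2}$, which will be the relevant case when analyzing the Jordan-type setting in the proof of Theorem~\ref{t:main}.
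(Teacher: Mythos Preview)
Your proposal is correct and follows essentially the same approach as the paper: both apply Lemma~\ref{l:eigenvalues} with $\eta=\tfrac{1}{2}$, solve $1+\tfrac{1}{4}\zeta=0$ to obtain $\zeta=-4$, and conclude via the multiplicity-preserving bijection of eigenvalues. The paper's proof is slightly terser but the content is identical.
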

\begin{proof} From the bijection between eigenvalues of
$\mathcal{M}$ and $\mathcal{A}$ in Lemma~\ref{l:eigenvalues}, we find that $0$ corresponds to $\zeta$ such that $1+\frac{1}{4}\zeta=0$, that is $\zeta=-4$.
\end{proof}

De Medts and Rehren classified Matsuo algebras that are Jordan algebras in \cite{TR2017}. Yabe corrected a gap in the case when the characteristic of the field equals 3~\cite{Yabe}.
For simplicity and since we are mainly interested in characteristic zero, we state the result when the field characteristic is not three.
\begin{theorem}{\em\cite[Main Theorem]{TR2017}}\label{t:tom}
Let $\mathbb{F}$ be a field, $\operatorname{char}(\mathbb{F})\neq2,3$, and let $J$ be a Jordan algebra over $\mathbb{F}$ which is also a Matsuo algebra. Then $J$ is a direct product of Matsuo algebras $J_i=M_{1/2}(G_i, D_i)$ corresponding to $3$-transposition groups $(G_i, D_i)$,
where for each $i$,
\\(i) either $G_i=\Sym(n)$, and $J_i$ is the Jordan algebra of $n\times n$ symmetric matrices over $F$ with zero row sums;
\\(ii) $G_i\simeq 3^2:2$, and $J_i$ is the Jordan algebra of hermitian $3\times 3$ matrices over
the quadratic \'etale extension $E=\mathbb{F}[x]/(x^2 + 3)$.
\end{theorem}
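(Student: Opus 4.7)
The plan is to proceed in four stages: reduce to the connected case, force $\eta=1/2$, eliminate most families of 3-transposition groups, and identify the two surviving families explicitly.

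\textbf{Stage 1 (Reduction).} Suppose $J=M_\eta(G,D)$ is Jordan. By Lemma~\ref{l:inherit}(ii) the diagram $(D)$ decomposes into connected components $D_i=D\cap G_i$ corresponding to conjugacy classes, and by \cite[Theorem~6.3]{hrs} $M_\eta(G,D)$ is the direct product of the $M_\eta(G_i,D_i)$. A direct product is Jordan iff each factor is, so we may assume $(D)$ is connected, i.e.\ $D=d^G$ for some $d$. Furthermore, any subalgebra of a Jordan algebra is Jordan; since for $S\subseteq D$ the subspace spanned by $S$ in $M$ is already closed under the Matsuo product, it equals the Matsuo algebra of $(\langle S\rangle, S^{\langle S\rangle}\cap D)$ and must be Jordan too. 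This ``locality'' is the engine of the whole argument.

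\textbf{Stage 2 (Forcing $\eta=1/2$).} If $(D)$ has no edges then $G\cong\Sym(2)$ with $|D|=1$ and $J=\mathbb F$, which falls under case~(i) with $n=2$. Otherwise $(D)$ contains a Fischer line $\{a,b,c\}$ with $c=a^b$, and by connectedness and the description of 3-transposition groups some further axis is attached. The cleanest way to extract a constraint on $\eta$ is to take a four-element subset realizing the $A_3$-type Coxeter diagram in some $\Sym(4)$ inside $G$ (which exists in every non-trivial case by the classification) and apply the linearized Jordan identity of Lemma~\ref{Jordan-id} to four of those axes. Expanding $(xz,y,w)+(zw,y,x)+(wx,y,z)=0$ using $c\cdot d=\tfrac{\eta}{2}(c+d-c^d)$ yields a polynomial relation in $\eta$ whose only admissible root in $\mathbb F\setminus\{0,1\}$ is $\eta=1/2$. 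This uses $\mathrm{char}(\mathbb F)\neq 2,3$ to clear denominators. In particular $\eta$ is not a free parameter once the Matsuo algebra has genuine 4-point Fischer-subspace content.

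\textbf{Stage 3 (Eliminating families).} Now with $\eta=1/2$ fixed, I would run through each of the 12 central types in Theorem~\ref{t:classification} and exhibit, in all cases outside of $\Sym(n)$ and the group $3^2{:}2$ (which appears as a small quotient of \textbf{PR1} with $h=2$), a four-element subset $\{a,b,c,d\}\subseteq D$ whose Fischer-subspace gives a 4-generated Matsuo subalgebra violating the linearized Jordan identity. The archetypal obstructions are: a ``dual affine plane'' subdiagram ruling out all the groups of orthogonal, symplectic and unitary type (\textbf{PR3}, \textbf{PR4}, \textbf{PR6}), a pair of intersecting Fischer lines in a $3^{\bullet h}$ block ruling out the central extensions in \textbf{PR1} with $h\geq 2$ and all of \textbf{PR2(b)}--\textbf{PR2(d)}, \textbf{PR5}, \textbf{PR8}--\textbf{PR12}, a $D_4$-subdiagram of transvections ruling out \textbf{PR2(a)} with $h\geq 1$, and $\Fi_{22}$-like local neighborhoods ruling out the sporadics in \textbf{PR7}. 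Using Stage~1 each such local violation propagates to show the ambient algebra is not Jordan. This is where the main effort lies: it is the case-by-case elimination, and the principal obstacle is to choose for each family a convenient quadruple whose associator sum is demonstrably nonzero and then carry out the (routine but lengthy) Matsuo product expansion.

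\textbf{Stage 4 (Identifying the survivors).} Only $G_i=\Sym(n)$ and $G_i\cong 3^2{:}2$ remain. For $\Sym(n)$ with $D$ the transpositions, the map $(ij)\mapsto \tfrac12(e_i-e_j)(e_i-e_j)^T$ sends each transposition to a rank-one symmetric matrix with zero row sums, the products under the usual Jordan product $X\circ Y=\tfrac12(XY+YX)$ match the Matsuo product with $\eta=1/2$ (a direct check on three basis elements suffices), and a dimension count gives the subalgebra of $\mathrm{Sym}_n(\mathbb F)$ consisting of matrices with zero row sums, yielding (i). For $3^2{:}2$, the nine 3-transpositions correspond to the nine points of the affine plane $\mathrm{AG}(2,3)$; an analogous identification sends each point to a rank-one hermitian matrix over $E=\mathbb F[x]/(x^2+3)$, where the quadratic extension appears naturally because the primitive cube roots of unity defining the affine plane live in $E$, and one verifies that the nine images span the full $9$-dimensional hermitian matrix algebra, giving~(ii). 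Both identifications are then confirmed to be algebra isomorphisms by checking products on a generating set; this final verification is routine compared to Stage~3.
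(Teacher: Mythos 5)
First, note that the paper does not prove this statement: Theorem~\ref{t:tom} is imported verbatim as the Main Theorem of \cite{TR2017} (with Yabe's characteristic-3 caveat), so there is no internal proof to compare against. Judged on its own merits, your skeleton --- reduce to connected components, force $\eta=\tfrac12$ by a local computation, eliminate the remaining central types from Theorem~\ref{t:classification}, and identify the two survivors --- is a reasonable reconstruction of the De Medts--Rehren strategy, and your Stage~4 identifications (the map $(ij)\mapsto\tfrac12(e_i-e_j)(e_i-e_j)^T$ and the hermitian model over $\mathbb{F}[x]/(x^2+3)$) are correct. Stage~2 also contains only a repairable slip: a $\Sym(4)$-configuration does \emph{not} exist inside $3^2:2$ (one of the two surviving cases!), so your stated source of the constraint on $\eta$ is unavailable there; however, a single Fischer line $\{a,b,c\}$ already gives an associator combination proportional to $\eta(\eta-1)(\eta-\tfrac12)(a-b)$, so $\eta=\tfrac12$ is forced as soon as $|D|>1$.

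The genuine gap is in Stage~3, which is the heart of the theorem. The configurations you name as ``archetypal obstructions'' are provably \emph{not} obstructions. The dual affine plane of order $2$ is precisely the Fischer space of $\Sym(4)$, whose Matsuo algebra at $\eta=\tfrac12$ is the Jordan algebra of $4\times4$ symmetric matrices with zero row sums --- case (i) of the very theorem you are proving --- so ``a dual affine plane subdiagram'' cannot rule out \textbf{PR3}, \textbf{PR4}, \textbf{PR6}, and indeed every Fischer space with at least two intersecting lines contains such planes. Likewise, two intersecting lines in a $3^{\bullet h}$ block generate at most an affine plane of order $3$, which is the Fischer space of $3^2:2$ and is again Jordan (case (ii)); and your phrase ``\textbf{PR1} with $h\geq2$'' would eliminate $3^2:2$ itself, which is \textbf{PR1} with $h=2$. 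The true violating quadruples necessarily involve configurations larger than a single plane (for instance a plane together with a further point forcing a plane of the other type, or two planes of the same type glued along a line in a non-$\Sym(n)$ fashion), and their associator sums typically land in the radical $M^\perp$ without vanishing --- which is exactly why, as Proposition~\ref{p:main} of this paper shows, many of these groups reappear once one passes to $M/M^\perp$. Exhibiting a nonvanishing $w(a,b,c,d)$ family by family is the actual content of the classification, and as written your argument does not supply it; worse, applied literally it would also ``eliminate'' the two cases that survive.
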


\section{Preliminaries: wreath product}\label{sec:wreath}

In this section, we discuss 3-transposition groups that correspond to types {\bf PR2(a-e)} in Theorem~\ref{t:classification}. All these groups can be constructed from the wreath product of a group whose elements have orders not exceeding 3 and a symmetric group.

Denote the base group of the wreath product $G=T\operatorname{wr}\Sym(n)$ by $B$, i.e. $B=T^n$.  The natural injection $\iota_i$ of $T$ as the $i$-th direct factor $T_i$ of $B$ is given by $\iota_i(t)=t_i$, where $1\leq i\leq n$. The projection $\pi_i$ of $B$ onto $T$
induced by the $i$-th factor is given by $\pi_i(b)=b(i)$. We identify $\Sym(n)$ with the complement to $B$ in $G$ which acts naturally on the indices from $\{1,\ldots,n\}$. Let $Wr(T,n)$ be the subgroup $\langle d^G\rangle$ of $G$, where $d$ is a transposition of the complement to $B$. Note that the factor group $G/Wr(T,n)$ is isomorphic to the abelian group $T/T'$, in particular $Wr(T,n)$ can be a proper subgroup of $G$. The following statement describes when $d^G$ is a class of 3-transpositions.
\begin{prop}{\em\cite[Theorem~6]{Zara88},
\cite[Prop.~8.1]{H93}}\label{p:w(k,n)}
Suppose that $T$ is a finite group and $G=T\wre\Sym(n)$. Fix a transposition $d$ of $\Sym(n)$. Then $d^G$ is a class of $3$-transpositions if and only if each element of $T$ has order $1$, $2$, or~$3$.
\end{prop}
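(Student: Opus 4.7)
The plan is to describe $d^G$ explicitly and then reduce the $3$-transposition condition to a short case analysis on products of pairs of conjugates. Writing elements of $G = T\wre\Sym(n)$ as $(b,\sigma)$ with $b\in B = T^n$ and $\sigma\in\Sym(n)$, we have $d = (1,(i,j))$ for some transposition $(i,j)\in\Sym(n)$. Using the semidirect product action $\phi_\sigma(b)_k = b_{\sigma^{-1}(k)}$, a direct conjugation computation shows that every $G$-conjugate of $d$ has the form
\[
e(a,c,t) := (x_{a,c,t},\, (a,c)),
\]
where $\{a,c\}$ is an unordered pair of indices from $\{1,\dots,n\}$, $t\in T$, and $x_{a,c,t}\in B$ has $t$ at position $a$, $t^{-1}$ at position $c$, and $1$ elsewhere. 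A short check gives $e(a,c,t)^2 = 1$, so $d^G$ automatically consists of involutions, regardless of the structure of $T$.

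Next I would verify the bound $|xy|\leq 3$ for arbitrary $x = e(a,c,s)$ and $y = e(a',c',u)$ in $d^G$ by splitting into cases according to the size of $\{a,c\}\cap\{a',c'\}$. In the disjoint case the permutation parts commute and the base parts have disjoint supports, so $(xy)^2 = 1$. In the overlap-one case (say $c=a'$ with $a,c,c'$ pairwise distinct), the permutation part of $xy$ is the $3$-cycle $(a,c,c')$, and expanding $(xy)^3$ in the semidirect product one finds that the base contribution at each of the three affected coordinates is a cyclic rotation of the triple $(su,\, u^{-1},\, s^{-1})$; each such product collapses to $1$, and crucially this cancellation does not require $T$ to be abelian. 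Finally, in the equal case $\{a,c\}=\{a',c'\}$, the two transpositions cancel and $xy$ reduces to a pure base element supported on $\{a,c\}$, whose order in $G$ equals the order of $su^{-1}$ in $T$.

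Combining the three cases, $d^G$ is a class of $3$-transpositions if and only if $|su^{-1}|\leq 3$ for all $s,u\in T$; taking $u=1$ forces every element of $T$ to have order $1$, $2$, or $3$, and conversely this condition makes $|su^{-1}|\leq 3$ automatic because $su^{-1}\in T$. The main technical step is the overlap-one computation, since cubing in the semidirect product mixes the $\Sym(n)$-action on three coordinates simultaneously; however, the cyclic symmetry of the $3$-cycle makes the three coordinate calculations identical up to relabeling, so once one is written out explicitly the remaining two reduce to exactly the same cancellation.
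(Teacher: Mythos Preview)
The paper does not give its own proof of this proposition; it is quoted as a known result from Zara and Hall, with no \texttt{proof} environment following the statement. So there is nothing to compare against on the paper's side.

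Your argument is correct. The explicit description of $d^G$ as the set of elements $t_i t_j^{-1}(i,j)$ is exactly what the paper records immediately afterwards (as a separate cited lemma), and your three-way split on $|\{a,c\}\cap\{a',c'\}|$ cleanly handles all products. Two small points worth making explicit in a write-up: first, the reduction ``say $c=a'$'' in the overlap-one case is justified because $e(a,c,t)=e(c,a,t^{-1})$, so any single shared index can be moved into that position by relabeling; second, in the equal-support case the base element has components $su^{-1}$ and $s^{-1}u$ at the two positions, and one should note that these are conjugate in $T$ (by $u$), hence have the same order, so the order of $xy$ really is $|su^{-1}|$ rather than an lcm. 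With those remarks added, the proof is complete and self-contained.
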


Note that the groups $T$ with restrictions as in the proposition were classified in \cite{Neumann}.
The next two lemmas are well known and describe how we deal with points and lines of the Fischer space of $Wr(T,n)$.

\begin{lem}{\em\cite[Lemma~3.2]{ms}}\label{l:3trans}
Consider the wreath product  $G=T\wre{\Sym(n)}$ and a transposition $d\in\Sym(n)$. Then $d^G$ consists of elements $t_it^{-1}_j(i,j)$, where $t\in T$ and $1\leq i< j\leq n$.
\end{lem}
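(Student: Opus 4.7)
The plan is a direct computation: first show that every element of $d^G$ has the claimed form, and then show that every such expression actually arises as a conjugate of $d$. Since $d^G$ is closed under conjugation and $G=B\Sym(n)$ with $B=T^n$, it suffices to analyse conjugation by $b\in B$ and by $\sigma\in\Sym(n)$ separately.

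First I would fix $d=(i,j)$ and compute $b^{-1}db$ for $b=(b_1,\dots,b_n)\in B$. Using that $d$ acts on $B$ by swapping the $i$-th and $j$-th coordinates, one gets $d^{-1}bd=b'$ where $b'$ agrees with $b$ outside coordinates $i,j$ and has $b'(i)=b_j$, $b'(j)=b_i$. Therefore $b^{-1}db=(b^{-1}b')d$, and a coordinatewise check shows $b^{-1}b'=t_i t_j^{-1}$ with $t=b_i^{-1}b_j\in T$. Conjugation by $\sigma\in\Sym(n)$ is even easier: it sends $t_i t_j^{-1}(i,j)$ to $t_{\sigma(i)}t_{\sigma(j)}^{-1}(\sigma(i),\sigma(j))$ (modulo a harmless swap to arrange $\sigma(i)<\sigma(j)$, which only replaces $t$ by $t^{-1}$). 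Composing these two steps shows that every conjugate $d^{b\sigma}$ is of the form $t_a t_b^{-1}(a,b)$ with $t\in T$ and $1\le a<b\le n$.

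For the reverse inclusion, fix $t\in T$ and $1\le a<b\le n$; I need to exhibit $g\in G$ with $d^g=t_a t_b^{-1}(a,b)$. Choose $\sigma\in\Sym(n)$ with $\sigma(i)=a$ and $\sigma(j)=b$, so that $d^\sigma=(a,b)$. Then, by the first part applied to the transposition $(a,b)$, conjugating $(a,b)$ by $b\in B$ with $b_a=t^{-1}$, $b_b=e$, and $b_k$ arbitrary for $k\ne a,b$ yields exactly $t_at_b^{-1}(a,b)$. Thus $d^{\sigma b}=t_at_b^{-1}(a,b)$, as required.

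The only real obstacle is bookkeeping: one must fix the wreath-product convention for $\sigma b \sigma^{-1}$ consistently and verify the coordinate identity $b^{-1}b'=t_it_j^{-1}$ carefully, so that the exponents on $t$ in the two coordinates really are $+1$ and $-1$ rather than both $+1$. Once this bookkeeping is done, both inclusions are immediate.
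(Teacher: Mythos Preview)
Your argument is correct. The paper does not supply its own proof of this lemma: it is quoted verbatim from \cite[Lemma~3.2]{ms} and used without further justification, so there is no in-paper proof to compare against. Your direct computation---conjugating $d=(i,j)$ first by elements of the base group $B$ and then by permutations, and observing that $G=B\Sym(n)$ as a set so that every $g\in G$ factors as $b\sigma$---is exactly the standard verification, and both inclusions are handled correctly.

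Two cosmetic points. First, you use $b$ both for an index in $\{1,\dots,n\}$ and for an element of $B$ in the reverse-inclusion paragraph; renaming one of them (say the base-group element to $\beta$) would avoid the clash. Second, as you yourself note, the direction of the $\sigma$-action on indices depends on the convention for the wreath product; in the paper's convention (inherited from \cite{ms}) one has $\sigma^{-1}t_i\sigma=t_{\sigma(i)}$ only up to the usual left/right ambiguity, but since for the reverse inclusion you may freely choose $\sigma$ to hit any desired pair $(a,b)$, this does not affect the conclusion.
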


\noindent{\bf Notation.} We write $t.(i,j)$ for the 3-transposition $t_it_j^{-1}(i,j)$ from Lemma~\ref{l:3trans}. Since $t.(i,j)=t^{-1}.(j,i)$, we will usually assume that $i<j$.

\begin{lem}{\em\cite[Lemma~3.3]{ms}}\label{l:lines} Suppose that each element of $T$ has order $1$, $2$, or $3$.
Then each line of the Fischer space of $Wr(T,n)$ coincides with one of the following sets.
\begin{enumerate}
\item[$(i)$] $\{t.(i,j), s.(j,k), ts.(i,k)\}$, where $s,t\in T$ and $1\leq i<j<k\leq n$;
\item[$(ii)$] $\{t.(i,j), s.(i,j), st^{-1}s.(i,j)\}$, where $s,t\in T$, $|st^{-1}|=3$, and $1\leq i<j\leq n$.
\end{enumerate}
\end{lem}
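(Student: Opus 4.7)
The plan is to work directly from the definition of a line in the Fischer space: given two collinear points $c,d\in D$ (that is, with $|cd|=3$), the line through them is the triple $\{c,d,e\}$, where $e=dcd$. By Lemma~\ref{l:3trans}, every $3$-transposition of $Wr(T,n)$ has the normal form $t.(i,j)=t_it_j^{-1}(i,j)$ with $t\in T$ and $i\ne j$, and the identity $t.(i,j)=t^{-1}.(j,i)$ lets me swap the two indices at the cost of inverting the element of $T$. Writing $c=t.(i,j)$ and $d=s.(k,l)$, I would then split the analysis into three cases according to $|\{i,j\}\cap\{k,l\}|\in\{0,1,2\}$.

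In the empty-intersection case, the transpositions $(i,j)$ and $(k,l)$ have disjoint support, and the base-group factors $t_i,t_j^{-1},s_k,s_l^{-1}$ lie in distinct direct summands of $T^n$; hence $c$ and $d$ commute, so $|cd|\in\{1,2\}$ and no line appears. In the size-one case, after possibly replacing $c$ by $c^{-1}$ or $d$ by $d^{-1}$ via the identity above, I may assume $c=t.(i,j)$ and $d=s.(j,l)$ with $i,j,l$ pairwise distinct. Multiplying out in the wreath product shows that $cd$ has permutation part a $3$-cycle on $\{i,j,l\}$, and a routine verification gives $(cd)^3=1$, so $|cd|=3$ holds automatically. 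Computing $e=dcd$ and collecting base-group factors by position yields $e=(ts).(i,l)$. A final relabeling that places the numerically middle element of $\{i,j,l\}$ in the role of the shared index in the statement, possibly inverting $t$ or $s$, converts the line into the form $(i)$.

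In the remaining case $\{i,j\}=\{k,l\}$, I may take $c=t.(i,j)$ and $d=s.(i,j)$. A direct calculation gives $cd=(ts^{-1})_i(t^{-1}s)_j$, an element of the base group whose order equals the common order of $ts^{-1}$ and its conjugate $t^{-1}s=s^{-1}(ts^{-1})s$. Thus $|cd|=3$ if and only if $|st^{-1}|=3$, matching the hypothesis in $(ii)$, and a further short computation gives $e=dcd=(st^{-1}s).(i,j)$, which is precisely the line described in $(ii)$.

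The hard part is entirely bookkeeping: because $T$ is not assumed abelian, each multiplication in $Wr(T,n)$ requires careful tracking of the order of noncommuting factors within a single summand of $T^n$, together with repeated use of the wreath-product identity $\sigma t_i\sigma^{-1}=t_{\sigma(i)}$ to conjugate base elements through transpositions and $3$-cycles. Once the calculations are executed in each of the three cases, the split is exhaustive and the output is exactly the families $(i)$ and $(ii)$.
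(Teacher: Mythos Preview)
Your argument is correct. The case split on $|\{i,j\}\cap\{k,l\}|$ is the natural one, and the computations you outline all go through: in the disjoint case $c$ and $d$ commute; in the one-index case the permutation part of $cd$ is a $3$-cycle, so $|cd|$ is divisible by $3$, while Proposition~\ref{p:w(k,n)} (or a direct check) forces $|cd|\le 3$, and the conjugate $dcd$ reduces to $(ts).(i,l)$ exactly as you say; in the two-index case $cd$ lies in the base group with coordinate entries $ts^{-1}$ and $t^{-1}s$, which have equal order since they are inverse conjugates, and $dcd=(st^{-1}s).(i,j)$ follows from the same bookkeeping.

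As for the comparison: the paper does not prove this lemma at all---it is quoted from \cite[Lemma~3.3]{ms} and used as a black box. Your write-up therefore supplies what the paper omits, and it is precisely the straightforward wreath-product calculation one would expect any proof of this statement to carry out.
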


Now we focus on 3-transposition groups $Wr(p,n)$,
where $p$ means the cyclic group of order $p\in\{2,3\}$.
Following \cite{ms} and \cite{gjmss},
we will use the following descriptions of the Fischer spaces of these groups.
Let $n$ be an integer and $n\geq3$. For $p\in\{2, 3\}$ consider the
$n$-dimensional permutational module $V$ of $\Sym(n)$ over $\mathbb{F}_p$.
Let $e_i$, $i\in\{1,\ldots,n\}$, be a basis of $V$ permuted by $\Sym(n)$.
Then the natural semi-direct product $V\rtimes\Sym(n)$ is isomorphic to $p\wre\Sym(n)$.
Denote the $(n-1)$-dimensional ‘sum-zero’ submodule of $V$ by $U$. Then $Wr(p, n)$ is isomorphic to the natural semidirect product $U\rtimes\Sym(n)$. Note that, for $p=2$ and even $n$, $U$ contains a 1-dimensional ‘all-one’ submodule, which is the center of $Wr(2, n)$.
When $p=3$, $U$ is irreducible. In both cases, $U$ is the unique minimal non-central normal subgroup of $Wr(p, n)$ and $Wr(p, n)/U\simeq\Sym(n)$.
Since $\Sym(n)$ does not have proper factor groups containing commuting involutions, we conclude
that, up to the center, groups $Wr(p,n)$ have no other factors that are 3-transposition groups.
Now we describe the Fischer spaces of these groups.

Assume that $p=2$. It follows from Lemmas~\ref{l:3trans} and~\ref{l:lines} that the Fischer space of
$Wr(2, n)=U:Sym(n)$ consists of $n(n-1)$ points: $b_{i,j}=(i,j)$ and $c_{i,j}=(e_i+e_j)(i,j)$, for $1\leq
i<j\leq n$; and $n^2$ lines, where each `b' line $\{b_{i,j},b_{i,k},b_{j,k}\}$, $1\leq i<j<k\leq n$, is complemented by three `bc' lines $\{b_{i,j},c_{i,k},c_{j,k}\}$, $\{b_{i,k},c_{i,j},c_{j,k}\}$,
and $\{b_{j,k},c_{i,j},c_{i,k}\}$.

Assume that $p=3$. By Lemma~\ref{l:3trans}, for each pair $i$ and $j$ with $1\leq i<j\leq n$ we have three points: $b_{i,j}=(i,j)=b_{j,i}$, $c_{i,j}=(e_i-e_j)(i,j)$ and $c_{j,i}=(e_j-e_i)(i,j)$. Consequently, the Fischer space has $\frac{3n(n-1)}{2}$ points.
By Lemma~\ref{l:lines}, the lines are of several types. First, for each $1\leq i<j\leq n$, the triple (1) $\{b_{i,j},c_{i,j},c_{j,i}\}$ is a line. Secondly, for distinct $i$, $j$, and $k$ in $\{1,\ldots,n\}$, the triples (2) $\{b_{i,j},b_{i,k},b_{j,k}\}$, (3) $\{b_{i,j},c_{i,k},c_{j,k}\}$, (4) $\{b_{j,k},c_{i,j},c_{i,k}\}$, and (5) $\{c_{i,j},c_{j,k},c_{k,i}\}$ are lines.

Using the descriptions of Fischer spaces, we find bases for the corresponding Matsuo algebras.
\begin{lem}\label{l:radical} Let $G=Wr(p,n)$, where $p\in\{2,3\}$ and $n\geq4$.
Denote by $D$ the corresponding 3-transposition set and by $M$ the Matsuo algebra $M_{1/2}(G,D)$. Then $\dim M^\perp=\frac{n(n-3)}{2}$ and the following assertions hold.
\begin{enumerate}
\item[$(i)$] If $p=2$, then $M^\perp$ is the span of elements $$b_{i,j}-b_{i,l}-b_{j,k}+b_{k,l}+c_{i,j}-c_{i,l}-c_{j,k}+c_{k,l},$$
where $i,j,k,l$ are distinct elements of $\{1,\ldots,n\}$ and $i$ is less than $j,k,l$.
\item[$(ii)$] If $p=3$, then $M^\perp$ is the span of elements
$$b_{i,j}-b_{i,l}-b_{j,k}+b_{k,l}+c_{i,j}-c_{i,l}-c_{j,k}+c_{k,l}+c_{j,i}-c_{l,i}-c_{k,j}+c_{l,k},$$
where $i,j,k,l$ are distinct elements of $\{1,\ldots,n\}$ and $i$ is less than $j,k,l$.
\end{enumerate}
%Moreover, we describe how to choose the basis inside these sets.
\end{lem}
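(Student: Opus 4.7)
The plan is to combine spectral data for the diagram $(D)$ with a direct Frobenius-form computation, and then to use a projection argument to identify the span of the listed vectors with all of $M^{\perp}$.

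I would first compute $\dim M^{\perp}$ via Corollary~\ref{c:critical_value}. The group $Wr(2,n)$ corresponds to type \textbf{PR2(a)} of Theorem~\ref{t:classification} with $h=1$, $m=n$, so Table~\ref{tab:spectra} shows that $-4=-2^{h+1}$ is an eigenvalue of the diagram of multiplicity $m(m-3)/2 = n(n-3)/2$; analogously $Wr(3,n)$ has type \textbf{PR2(b)} with $h=1$, $m=n$, and $-4=-3^{h}-1$ appears with the same multiplicity. Hence $\dim M^{\perp} = n(n-3)/2$ in both cases.

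Next I would verify that every listed vector $X$ lies in $M^{\perp}$. Since $\mathcal{M}=I+\tfrac{1}{4}\mathcal{A}$, this is equivalent to $\mathcal{A}X=-4X$: for each point $d$ of the Fischer space, the sum of the coefficients (in $X$) of the neighbours of $d$ must equal $-4$ times the coefficient of $d$. This is carried out by a case analysis on $d$ using the explicit line lists recalled just before the lemma. When $d$ has no neighbour in the support of $X$ the condition is immediate; when the pair associated with $d$ shares one or two indices with $\{i,j,k,l\}$, a short tally over the relevant triples (and, for $p=3$, over the type-(ii) line $\{b_{a,b},c_{a,b},c_{b,a}\}$ on each pair, together with the second orientation of $c$-points present in $X$) yields exactly $-4\alpha_d$.

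Finally, to show that the span $V$ of the listed vectors equals $M^{\perp}$, I would introduce the projection $\pi_b\colon M\to\operatorname{span}\{b_{a,b}:1\le a<b\le n\}$ that annihilates every $c$-point. Then $\pi_b(X(i,j,k,l)) = b_{i,j}-b_{i,l}-b_{j,k}+b_{k,l}$ is precisely the alternating $4$-cycle vector on the cycle $i{-}j{-}k{-}l{-}i$ in the complete graph $K_n$. Every such alternating $4$-cycle lies in the kernel of the unsigned incidence map $\partial\colon\mathbb{F}^{E(K_n)}\to\mathbb{F}^{V(K_n)}$, $e_{\{a,b\}}\mapsto v_a+v_b$, whose dimension is $\binom{n}{2}-n=n(n-3)/2$. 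A short induction on $n$ starting from the base case $K_4$ (at each step peeling off alternating $4$-cycle contributions that cancel the edges incident to the newly added vertex) shows that alternating $4$-cycle vectors actually span all of $\ker\partial$ for $n\ge 4$. Therefore $\pi_b(V)$ has dimension $n(n-3)/2$, so $\dim V \ge n(n-3)/2$, and the first step forces $V=M^{\perp}$. The main technical obstacle is the neighbour-sum bookkeeping in the verification step, which is noticeably more laborious for $p=3$ because of the additional type-(ii) lines and the second orientation of $c$-points that must be tracked simultaneously.
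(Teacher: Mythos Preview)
Your argument is correct and, for the first two steps, essentially matches the paper: both compute $\dim M^{\perp}$ from the spectral tables via Corollary~\ref{c:critical_value} (identifying $Wr(p,n)$ with type \textbf{PR2(a)} or \textbf{PR2(b)} at $h=1$), and both verify membership in $M^{\perp}$ by a case analysis on points of the Fischer space. Your reformulation $\mathcal{A}X=-4X$ is just the eigenvector version of the paper's Frobenius-form orthogonality check, and the bookkeeping is the same.

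The genuine difference is in the linear-independence step. The paper does not invoke any graph-theoretic identification: it simply exhibits an explicit set $\mathcal{B}_1\cup\mathcal{B}_2$ of $\frac{(n-2)(n-3)}{2}+(n-3)=\frac{n(n-3)}{2}$ radical elements, namely all $r(i,j)(n-1,n)$ with $1\le i<j<n-1$ together with all $r(1,n-1)(i,n)$ for $i\ne 1,n-1,n$, and reads off linear independence from the fact that each $b_{i,j}$ (resp.\ $b_{i,n}$) occurs in the support of exactly one of them. Your route via the projection $\pi_b$ and the kernel of the unsigned incidence map of $K_n$ is more conceptual and explains \emph{why} the dimension comes out to $\binom{n}{2}-n$, but it trades the paper's one-line support argument for an auxiliary claim (that alternating $4$-cycle vectors span $\ker\partial$) which itself needs an induction. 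Both approaches implicitly exploit only the $b$-coordinates; the paper's has the advantage of producing a concrete basis of $M^{\perp}$, which the paper then uses downstream for its ``enlargement property'' in the proof of Proposition~\ref{p:main}.
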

\begin{proof}
By Corollary~\ref{c:critical_value}, the dimension of $M^\perp$ is equal to the multiplicity of $-4$
in the spectrum of the diagram $(D)$. According to \cite[Example~PR2]{CH95}, if $p=2$, then
$G$ corresponds to the type {\bf PR2(a)} in Theorem~\ref{t:classification}, while if $p=3$, then $G$ corresponds to the type {\bf PR2(b)}. In both cases the parameter $h$ equals 1. It follows from
Theorem~\ref{t:spectrum} that the $-4$ has the multiplicity $\frac{n(n-3)}{2}$ in $Spec((D))$.
This implies that $\dim M^\perp=\frac{n(n-3)}{2}$.

For arbitrary distinct integers $i,j,k,l$ such that $1\leq i,j,k,l\leq n$
and $i$ less than $j,k,l$ denote
$$r(i,j)(k,l)=b_{i,j}-b_{i,l}-b_{j,k}+b_{k,l}+c_{i,j}-c_{i,l}-c_{j,k}+c_{k,l}\text{ if }p=2,$$
and
$$r(i,j)(k,l)=b_{i,j}-b_{i,l}-b_{j,k}+b_{k,l}+c_{i,j}-c_{i,l}-c_{j,k}+c_{k,l}+c_{j,i}-c_{l,i}-c_{k,j}+c_{l,k} \text{ if }p=3.$$
We claim that each $r(i,j)(k,l)$ belongs to $M^\perp$.
By symmetry of indices, it suffices to show this for $r(1,2)(3,4)$. Now we verify that each 3-transposition of $D$ is orthogonal to $r(1,2)(3,4)$ with respect to the Frobenius form.
Suppose that $p=3$. Take a 3-transposition $x_{ij}\in D$, where $x\in\{b,c\}$.
First we consider the case $i,j\in\{1,2,3,4\}$. If $x_{i,j}\in\{b_{1,2}, c_{1,2}, c_{2,1}\}$,
then
\begin{multline*}
(x_{i,j}, b_{1,2}+c_{1,2}+c_{2,1})=1+\frac{1}{4}+\frac{1}{4}=\frac{3}{2}, (x_{i,j}, b_{3,4}+c_{3,4}+c_{4,3})=0,\\ (x_{i,j}, -b_{1,4}-b_{2,3}-c_{1,4}-c_{4,1}-c_{3,4}-c_{4,3})=-6\cdot\frac{1}{4}=-\frac{3}{2}.
\end{multline*}
Therefore, we infer that $(x_{i,j}, r(1,2)(3,4))=0$. Similarly, we see that $(x_{i,j}, r(1,2)(3,4))=0$ when  $x_{ij}\in\{b_{3,4},c_{3,4},c_{4,3},b_{1,4},c_{1,4},c_{4,1},b_{2,3},c_{2,3},c_{3,2}\}$. Let $x_{i,j}\in\{b_{1,3},c_{3,1},c_{3,1}, b_{2,4},c_{2,4},c_{4,2}\}$. Then
$$(x_{i,j}, b_{1,2}+c_{1,2}+c_{2,1})=(x_{i,j}, b_{3,4}+c_{3,4}+c_{4,3})=\frac{3}{4}, (x_{i,j}, -b_{1,4}-c_{1,4}-c_{4,1})=(x_{i,j}, b_{2,3}+c_{2,3}+c_{3,2})=-\frac{3}{4}.$$
Therefore, we see that $(x_{i,j}, r(1,2)(3,4))=0$. Clearly, if $i,j\not\in\{1,2,3,4\}$, then $(x_{i,j}, r(1,2)(3,4))=0$.
So it remains to consider the case when $|\{i,j\}\cap\{1,2,3,4\}|=1$.
Note that for each integer $k\in\{1,2,3,4\}$, exactly six out of the twelve terms in $r(1,2)(3,4)$ contain $k$ as an index, moreover, three of these six are included in the expression with a plus sign and three with a minus sign. This implies that $x_{ij}$ is orthogonal to $r(1,2)(3,4)$.
The case $p=2$ can be considered in a similar manner.

Now we present $\frac{n(n-3)}{2}$ linearly independent elements among $\{r(i,j)(k,l)\}$.
Consider two sets of elements of $D$: $\mathcal{B}_1=\{r(i,j)(n-1,n) \mid 1\leq i<j< n-1 \}$ and  $\mathcal{B}_2=\{r(1,n-1)(i,n) \mid i\neq 1,n-1,n \}$.
Suppose that the set $\mathcal{B}_1\cup\mathcal{B}_2$ is linearly dependent in $M$.
Note that if $(i,j)$ is a pair with $1\leq i<j<n-1$, then
$r(i,j)(n-1,n)$ is the only element of $\mathcal{B}_1\cup\mathcal{B}_2$ including $b_{i,j}$ in its expression. It follows that if a non-trivial linear combination of elements of $\mathcal{B}_1\cup\mathcal{B}_2$ is equal to 0, then only elements from $\mathcal{B}_2$ have non-zero coefficients.
On the other hand, if $i\neq1,n-1,n$, then $r(1,n-1)(i,n)$ is the only element in $\mathcal{B}_2$
including $b_{in}$ in its expression and hence  $\mathcal{B}_2$ is linearly independent;
we arrive at a contradiction. Thus, the set
$\mathcal{B}_1\cup\mathcal{B}_2$ is linearly independent. Since $|\mathcal{B}_1|=\frac{(n-2)(n-3)}{2}$, $|\mathcal{B}_2|=n-3$, and $\mathcal{B}_1\cap\mathcal{B}_2=\varnothing$,
we find $\frac{n(n-3)}{2}$ linearly independent elements in $M^\perp$.
This implies that the set $\{r(i,j)(k,l)\}$ spans the radical of $M$ and as a basis we can take elements of $\mathcal{B}_1\cup\mathcal{B}_2$.
%Now if $p=2$ then points of the fisher space are $b_{i,j}=(i,j)=b_{j,i}$, $c_{i,j}=(e_i+e_j)(i,j)$, and we can
%use similar arguments for $r(i,j)(k,l)=b_{ij}+b_{il}+b_{jk}+b_{kl}+c_{ij}+c_{il}+c_{jk}+c_{kl}.$
%Looking at the shape of the elements observe that if $m \leq n$ then the radical $R_m$ of the Matsuo algebra  for $p^{m-1}:S_m$ is contained in $R_n$. This allows us to check Jordan identity only for small $n$.
%More precisely, (WHO IS NEXT)
\end{proof}

\section{Proof of the main theorem}\label{sec:proof}

In this section, we prove Theorem~\ref{t:main}.
Throughout, we suppose that $\mathbb{F}$ is a field of characteristic zero.
First we consider the case when the parameter $\eta$ in Matsuo algebra is not equal to $\frac{1}{2}$.

\begin{lem} Suppose that $\eta\neq\frac{1}{2}$ and $M=M_\eta(G,D)$ is the Matsuo algebra for a finite connected $3$-transposition group $(G,D)$. A factor of $M$ by its ideal $I\neq M$ is a Jordan algebra if and only if one of the following statements holds.
\begin{enumerate}
    \item[(i)] $G$ is the cyclic group of order $2$ and $I=(0)$;
    \item[(ii)] $\eta=2$, the product of every two distinct elements of $D$ has order $3$, $I$ is the span of elements $d-e$, where $d$ and $e$ run over $D$. In this case $M/I$ is one-dimensional.
 \end{enumerate}
\end{lem}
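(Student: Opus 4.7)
The plan is to start with the hypothesis that $M/I$ is a Jordan algebra with $I \neq M$ and extract the structural constraints forced by the Peirce decomposition. The first step is to fix an axis $a \in D$ and look at its image $\bar a \in M/I$: if $\bar a \neq 0$, Lemma~\ref{l:peirce} applied to the Jordan algebra $M/I$ forces the adjoint of $\bar a$ to have eigenvalues in $\{0, 1/2, 1\}$, whereas eigenvalues of $ad_a$ on $M$ lie in $\{0, 1, \eta\}$ and descend to the quotient since $I$ is $ad_a$-invariant; because $\eta \neq 1/2$, the $\eta$-eigenspace $M_\eta(a)$ must map to zero, i.e. $M_\eta(a) \subseteq I$. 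If instead $\bar a = 0$, the same conclusion follows directly from $a \cdot v = \eta v$ together with $\eta \neq 0$. I expect this eigenvalue-matching observation to be the crux of the argument.

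Next, I would unpack $M_\eta(a)$ through Fischer lines. For any Fischer line $\{a, b, c\} \subseteq D$, a short computation from $a \cdot b = \frac{\eta}{2}(a + b - c)$ and $a \cdot c = \frac{\eta}{2}(a + c - b)$ yields $a \cdot (b - c) = \eta(b - c)$, so $b - c \in M_\eta(a) \subseteq I$; permuting the role of the base axis through $b$ and $c$ puts all three pairwise differences $a-b$, $b-c$, $a-c$ in $I$. Using connectedness of $(D)$ and paths through Fischer lines, this upgrades to $d - e \in I$ for all $d, e \in D$. Setting $I_0 = \operatorname{span}\{d - e : d, e \in D\}$, one obtains $I_0 \subseteq I$, and moreover $M/I$ is generated by the single idempotent $\bar d$ (the common image of all $d \in D$), so $\dim(M/I) \leq 1$.

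The third step is a dimension count and case split. If $|D| = 1$, then $G$ is cyclic of order $2$ and $I = 0$, giving case (i). If $|D| \geq 2$ and $I \neq M$, then $\dim(M/I) = 1$, which forces $I = I_0$ by comparing codimensions; in particular $I_0$ must itself be an ideal. To see when this happens, I would introduce the coefficient-sum functional $\pi : M \to \mathbb{F}$, for which $I_0 = \ker \pi$, so that $I_0$ is an ideal exactly when $\pi$ is an algebra homomorphism. Evaluating $\pi$ on products of basis elements gives $\pi(c \cdot d) = 1$ for $c = d$, $0$ for $|cd| = 2$, and $\eta/2$ for $|cd| = 3$; comparing the last two values shows that no pair of distinct elements of $D$ can have product of order $2$, and comparing with the first forces $\eta = 2$. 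This is exactly case (ii).

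The converse directions should be short verifications: in case (i), $M \cong \mathbb{F}$ is trivially Jordan; in case (ii), the above values of $\pi$ collapse to the single value $1$ on all products of basis elements, so $\pi$ is an algebra homomorphism, $I_0$ is a codimension-one ideal, and $M/I_0 \cong \mathbb{F}$ is Jordan. The only genuinely nontrivial piece of the whole argument is the opening Peirce-eigenvalue observation; the remainder is an exercise in Fischer-line geometry combined with linear algebra on the coefficient-sum functional.
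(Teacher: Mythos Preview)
Your argument is correct and follows the same overall arc as the paper's proof: both use the Peirce decomposition to force $M_\eta(a)\subseteq I$, then the Fischer-line computation $a\cdot(b-c)=\eta(b-c)$ together with connectedness to conclude that all axes have the same image in $M/I$, reducing to a one-dimensional quotient.

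The one substantive difference lies in how you extract the constraints ``no commuting pairs'' and ``$\eta=2$''. The paper handles these separately: the commuting-pair obstruction is dispatched by the direct computation $\overline{d\cdot e}=0=\bar d^2=\bar d$, while $\eta=2$ is obtained by invoking Proposition~\ref{p:summary} (every proper ideal of a connected Matsuo algebra lies in $M^\perp$) and then reading off $(c,c-d)=1-\eta/2=0$ from the Frobenius form. Your coefficient-sum functional $\pi$ handles both constraints at once and is entirely internal to the algebra structure, so you never need Proposition~\ref{p:summary} or the Frobenius form. This is a modest but genuine simplification; the paper's route, on the other hand, makes the connection to the radical $M^\perp$ explicit, which ties in more visibly with the framing of Theorem~\ref{t:main}.
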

\begin{proof}
Clearly, if $D=\{d\}$, then $G$ is the cyclic group of order 2 and $M$ is generated by $d$.
So $M$ is associative and one-dimensional. Therefore, we can assume that $|D|\geq2$.

Take any $c\in D$. Since $(D)$ is connected and $|D|\geq2$,
there exists $d\in D$ such that $|cd|=3$. If $x\in M$, then denote by $\overline{x}$ the image of $x$ in $M/I$. Note that $\overline{c}$ is an idempotent in $M/I$. Denote by $e$ the third point on the line through $c$ and $d$ in the Fischer space $\Gamma(G,D)$. Then $e\cdot(c-d)=\frac{\eta}{2}(e+c-d-e-d+c)=\eta(c-d)$ and hence $\overline{e}\cdot(\overline{c}-\overline{d})=\eta(\overline{c}-\overline{d})$.
It follows from Lemma~\ref{l:peirce} that $\overline{c}=\overline{d}$. Since $c$ is an arbitrary element of $D$ and $(D)$ is connected, we infer that $M/I$ is one-dimensional and spanned by $\overline{d}$ for each $d\in D$. So $M/I$ is associative and hence it is a Jordan algebra.
Suppose that there exist $d,e\in D$ such that $|de|=2$. Then $0=\overline{d\cdot e}=\overline{d}\cdot\overline{e}=\overline{d}^2=\overline{d}$ and hence $d\in I$. All elements of $D$ are conjugated in $G$, so this is true for all elements of $D$; a contradiction. It remains to show that $\eta=2$.
Therefore, the product of any two distinct elements of $D$ has order 3. Suppose that $c$ and $d$ are distinct elements in $D$. By \cite{kms}, $I\subseteq M^\perp$ and hence $c-d\in M^\perp$.
On the other hand, $(c,c-d)=1-\frac{\eta}{2}$ and hence $\eta=2$.

Conversely, suppose that $\eta=2$ and the product of any two elements in $D$ has order 3.
We show that for every $c,d\in D$ it is true that $c-d\in M^\perp$.
First, we see that $(c,c-d)=(d,c-d)=1-1=0$. If $e\in D\setminus\{c,d\}$, then $(e,c)=(e,d)=1$ and hence $(e,c-d)=0$. Since $c-d$ is orthogonal to all elements in $D$ with respect to the Frobenius form,
we infer that $c-d\in M^\perp$. This implies that $M/M^\perp$ is 1-dimensional and the result follows.
\end{proof}

Matsuo algebras $M_{1/2}(G,D)$, where $(G,D)$ is a 3-transposition groups,
that are Jordan algebras were classified in \cite{TR2017}. In particular,
if $(D)$ is connected, then $G\simeq\Sym(n)$ or has the same central type as the Frobenius group $3^2:2$. In view of Theorem~\ref{t:classification}, the symmetric group has the type ${\bf PR2(a)}$ and $3^2:2$ has the type ${\bf PR1}$. It follows from Corollary~\ref{c:critical_value} and Theorem~\ref{t:spectrum} that
$M_{1/2}(G,D)$ are simple in these cases. To prove Theorem~\ref{t:main} it remains to consider Matsuo algebras for $\eta=\frac{1}{2}$ whose radical is nontrivial.

\begin{prop}\label{p:main}
Suppose that $(G,D)$ is a finite connected $3$-transposition group and the Matsuo algebra
$M=M_{1/2}(G,D)$ has nontrivial radical $M^\perp$. Then $J=M/M^\perp$ is a Jordan algebra if and only if one the following statements holds.
\begin{enumerate}
\item $G\simeq 2^{\bullet1}:\Sym(m)$, where $m\geq4$ and $\dim J=\frac{m(m+1)}{2}$; %PR2a
\item $G\simeq 3^{\bullet1}:\Sym(m)$, where $m\geq4$ and $\dim J=m^2$;  %PR2b
\item $G\simeq O_8^+(2)$ and $\dim J=36$;   %PR3, checked in GAP
\item $G\simeq O_6^-(2)\simeq{}^+\Omega_5^+(3)$ and $\dim J=21$;   %PR3, PR5 checked in GAP
\item $G\simeq Sp_6(2)$ and $\dim J=28$;  % PR4  checked in GAP
\item $G\simeq{}^+\Omega_6^-(3)$ and $\dim J=36$; %PR5
\item $G\simeq 2\times SU_4(2)\simeq{}^+\Omega_5^-(3)$ and $\dim J=25$; %PR5 and PR6
\item $G\simeq SU_5(2)$ and $\dim J=45$; %PR6  checked in GAP
\item $G\simeq 4^{\bullet1}SU_3(2)'$ and $\dim J=28$; %PR6 checked in GAP
\end{enumerate}
\end{prop}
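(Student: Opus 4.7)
The plan is to combine the Cuypers--Hall classification (Theorem~\ref{t:classification}) with the spectrum tables of Theorem~\ref{t:spectrum} and the eigenvalue correspondence of Corollary~\ref{c:critical_value}. By the corollary, $M^\perp \ne 0$ is equivalent to $-4$ lying in the spectrum of the diagram $(D)$. So my first step is to run through each series PR1--PR12 and solve, in the parameters $(m,h,\epsilon)$, the equations that force one of the listed eigenvalues to equal $-4$ (for example $-2^{h+1}=-4$ for PR2(a), $-3^h-1=-4$ for PR2(b), $-2^{h+m-1}=-4$ for PR4, $-3^{(m-3)/2+h}-1=-4$ for the odd PR5 series, and so on), keeping the parameter ranges from Theorem~\ref{t:classification} in mind. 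A short calculation shows that the only parameter values producing a $-4$ eigenvalue are those giving the nine groups in conclusions (1)--(9) together with two additional candidates, namely PR2(d) with $h=1$ and the sporadic type $\Pom^+_8(2):\Sym(3)$ from PR7(d). In every candidate case the dimension $|D|-\text{mult}_{-4}$ gives the dimension of $J$.

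The positive direction---showing $J$ is Jordan in the nine listed cases---splits naturally. For the two infinite series (1) and (2), where $G=2^{\bullet1}:\Sym(m)=Wr(2,m)$ and $G=3^{\bullet1}:\Sym(m)=Wr(3,m)$, I would use the coordinate description of the Fischer space from Section~\ref{sec:wreath} and the basis of $M^\perp$ from Lemma~\ref{l:radical}. For $p=2$, the plan is to exhibit a linear map from $M$ onto the Jordan algebra of symmetric $m\times m$ matrices (dimension $\tfrac{m(m+1)}{2}$) sending the axes $b_{i,j}$ and $c_{i,j}$ to specific symmetric matrices whose pairwise Jordan products reproduce the Matsuo product; direct verification that the kernel contains, and hence equals, the span of the $r(i,j)(k,l)$ relations from Lemma~\ref{l:radical} finishes the case. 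An analogous construction for $p=3$ should identify $J$ with an $m^2$-dimensional Jordan algebra of hermitian matrices over the \'etale quadratic extension $\mathbb{F}[x]/(x^2+3)$, extending Theorem~\ref{t:tom}(ii). For the seven finite groups in (3)--(9), I would either exhibit $J$ directly as a classical Jordan algebra of the right dimension ($21, 25, 28, 36, 45$), or verify the Jordan identity via a short generator check enabled by the isomorphisms $O_6^-(2)\simeq{}^+\Omega_5^+(3)$ and $2\times SU_4(2)\simeq{}^+\Omega_5^-(3)$ together with the related classical embeddings.

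The negative direction---excluding PR2(d) with $h=1$ and PR7(d)---is handled by Lemma~\ref{Jordan-id}: I aim to exhibit four explicit elements of $J$ whose associator sum $(xz,y,w)+(zw,y,x)+(wx,y,z)$ is nonzero. For PR2(d) the wreath product $4^{\bullet1}:3^{\bullet1}:\Sym(m)$ provides a coordinate-friendly description, and lines of type (ii) from Lemma~\ref{l:lines} furnish a short witness; one needs only confirm that the witness is nonzero modulo $M^\perp$, which follows from Lemma~\ref{l:radical}-style bookkeeping. For $\Pom^+_8(2):\Sym(3)$ a short computation inside a suitable subdiagram (or, failing a slick argument, a computer algebra check) should produce a nonzero associator sum modulo $M^\perp$.

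The main obstacle I expect is constructing the explicit Jordan algebra structure in cases (3)--(9): one must identify, for each of these groups, a concrete Jordan algebra of the predicted dimension together with a surjection $M\twoheadrightarrow J$ whose kernel is precisely $M^\perp$. The dimensions $21, 25, 28, 36, 45$ match naturally with Jordan algebras of symmetric or hermitian matrices, so the real work lies in lining up generators. In the two infinite families (1) and (2), Lemma~\ref{l:radical} supplies a clean basis of $M^\perp$ that drives the identification; for the sporadic cases, a more ad hoc matching---probably via the natural representation of $G$ preserving a symmetric bilinear form---will be needed.
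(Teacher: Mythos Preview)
Your reductive first step---running the classification and spectrum tables through Corollary~\ref{c:critical_value} to isolate the candidate groups---matches the paper exactly, including the identification of the two extra candidates PR2(d) with $h=1$ and PR7(d). The divergence is in the verification step. For the positive direction you propose to \emph{construct} each $J$ explicitly as a classical Jordan algebra (symmetric or hermitian matrices of the right size) and exhibit a surjection $M\twoheadrightarrow J$ with kernel $M^\perp$; the paper instead verifies the linearized Jordan identity of Lemma~\ref{Jordan-id} directly, by GAP computation in each of the seven finite cases (3)--(9), and for the two infinite series PR2(a), PR2(b) reduces to a finite GAP check over $Wr(p,k)$ with $k\le 8$ (any four axes involve at most eight indices) together with the observation that the radical generators of Lemma~\ref{l:radical} enlarge compatibly as $m$ grows. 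Your approach, if carried out, would yield more: an actual identification of each $J$, which the paper leaves as an open problem immediately after this proposition. The cost is that you have not yet specified the maps, and for the seven sporadic cases you admit this is the main obstacle; the paper's GAP route sidesteps this entirely at the price of being non-constructive. For the negative direction on PR2(d) and PR7(d) the paper also relies on GAP (using a presentation from \cite{HS95} for the latter), and your fallback to a computer check there aligns with what is actually done; note only that Lemma~\ref{l:radical} as stated covers $p\in\{2,3\}$, so the ``Lemma~\ref{l:radical}-style bookkeeping'' you invoke for PR2(d) (where $T=\Alt(4)$) would need to be developed separately.
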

\begin{proof}
We sort out possibilities for $G$ from Theorem~\ref{t:classification}.
By Corollary~\ref{c:critical_value}, the dimension of $M^\perp$ equals to the multiplicity of $-4$ in the spectrum of the diagram $(D)$. Therefore, we need to find all $G$ such that $-4$ is in the spectrum of $(D)$. According to Table~\ref{tab:spectra}, $G$ does not belong to types $\bf{PR1,PR2(c),PR7(a,b,c,e), PR8-PR12}$. Now we consider the remaining cases.

Assume that the type of $G$ is ${\bf PR2(a)}$. According to Table~\ref{tab:spectra}, we see that $-2^{h+1}=-4$ and hence $h=1$.
Therefore, $G=2^{\bullet1}:\Sym(m)\simeq Wr(2,m)$, $|D|=m(m-1)$, $\dim M^\perp=\frac{m(m-3)}{2}$, and $\dim J=\frac{m(m-3)}{2}$. We claim that $J$ is a Jordan algebra in this case.
By Lemma~\ref{Jordan-id}, this is true if and only if every $a,b,c,d\in D$ satisfy the following:
$$w(a,b,c,d)=(a\cdot d, b, c) + (d\cdot c, b, a) + (c\cdot a, b, d)\in M^\perp.$$
We use the description of $D$ as in Section~\ref{sec:wreath}, so each $a\in D$ is equal to some
$x_{i,j}$, where $1\leq i\neq j\leq m$ and $x\in\{'b','c'\}$. In this notation, expressions for elements $a,b,c,d$ include no more than 8 distinct indices $i,j$, so we can consider $a$, $b$, $c$, and $d$ as elements of $H_k=Wr(2,k)$ with $k\leq 8$ after renumbering indices in the corresponding elements $x_{i,j}$. Using GAP\footnote{all verifications in GAP related to this proof are contained in the following file: \url{https://github.com/AlexeyStaroletov/AxialAlgebras/blob/master/JordanFactors/Groups}}~\cite{GAP}, we verify that the element $w(a,b,c,d)$ for every 3-transpositions $a,b,c,d$ from $H_k$ lies in the radical of the Frobenius form of the Matsuo algebra for $H_k$, where $4\leq k\leq 8$. Note that the following enlargement property is true for the radical in these cases: elements from Lemma~\ref{l:radical} that span $M_k^\perp$ belong to $M_n^\perp$ for all $n\geq k$. This implies that $w(a,b,c,d)\in M^\perp$ for all $a,b,c,d\in D$; as claimed.

Assume that the type of $G$ is ${\bf PR2(b)}$. Then $-3^h-1=-4$ and hence $h=1$. So
$|D|=\frac{3m(m-1)}{2}$,  $G=3^{\bullet 1}:\Sym(m)\simeq Wr(3,m)$ and $\dim M^\perp=\frac{m(m-3)}{2}$.
So $\dim J=\frac{3m(m-1)}{2}-\frac{m(m-3)}{2}=m^2$. We verify that $J$ is a Jordan algebra in the same way as in the previous case. Namely, we use GAP to verify the linearized Jordan identity from Lemma~\ref{Jordan-id} for all $m$ with $4\leq m\leq 8$. The general case follows from the description of a basis of $M^\perp$ in Lemma~\ref{l:radical} since this basis satisfy the enlargement property with increasing~$m$.

Assume that the type of $G$ is ${\bf PR2(d)}$. Then $-4^h=-4$ and hence $h=1$. According to \cite[Example~PR2]{CH95}, $G$ has the same central type as $Wr(\Alt(4), m)$. By the wreath product construction, we can assume that $Wr(\Alt(4), m)$ is a subgroup $Wr(\Alt(4), n)$ if $m\leq n$ and hence there is also an embedding of the corresponding Matsuo algebras. Clearly, if a factor of an algebra $A$ by its ideal is a Jordan algebra, then all subalgebras of $A$ also have factors that are Jordan algebras. Using GAP and Lemma~\ref{Jordan-id}, we verify that the factor algebra of the Matsuo algebra for $Wr(Alt(4), 4)$ by its radical is not a Jordan algebra. Therefore,
this case is impossible.

Assume that the type of $G$ is ${\bf PR3}$. Recall that $m\geq3$ and $(m,\epsilon)\neq(3,+)$.
If $\epsilon=+$, then $-2^{h+m-2}=-4$. This implies that $h=0$ and $m=4$. Then $|D|=2^7-2^3=120$, $\dim M^\perp=(2^8-4)/3=84$, and $\dim J=36$. If $\epsilon=-$, then $-2^{h+m-1}=-4$, so
$h=0$ and $m=3$. Therefore, we see that $|D|=2^5+2^2=36$, $\dim M^\perp=(2^3+1)(2^2+1)/3=15$, and $\dim J=21$. Using GAP, we verify that in both cases $J$ is a Jordan algebra.

Assume that the type of $G$ is ${\bf PR4}$. Then $-2^{h+m-1}=-4$. Since $m\geq3$, we infer that $h=0$ and $m=3$. According to Table~\ref{t:spectrum}, we find that $|D|=2^6-1=63$, $\dim M^\perp=2^5+2^2-1=35$, $\dim J=28$. Using GAP, we verify that $J$ is Jordan.

Assume that the type of $G$ is ${\bf PR5}$. If $m$ is odd, then $-3^{(m-3)/2+h}-1=-4$, so $m=5$ and $h=0$. According to \cite[Example~1.5]{CH95}, it is true that ${}^+\Omega_5^-(3)\simeq 2\times SU_4(2)\simeq$ and ${}^+\Omega_5^+(3)\simeq O_6^-(2)$. The algebra $J$ is considered in the corresponding cases for $G\in\{SU_4(2), O_6^-(2)\}$. Suppose that $m$ is even.
According to Table~\ref{t:spectrum}, we see that $\epsilon=-$ and
 $-3^{(m-4)/2+h}-1=-4$. This implies that $m=6$ and $h=0$. Then $|D|=(3^5+3^2)/2=126$, $\dim M^\perp=(3^6-9)/8=90$, and $\dim J=36$.
Using GAP, we verify that $J$ is a Jordan algebra.

Assume that the type of $G$ is ${\bf PR6}$.
If $m$ is even, then $-2^{2h+m-2}=-4$, so $m=4$ and $h=0$. Therefore, $|D|=(2^7-1+2^3)/3=45$, $\dim M^\perp=4(2^5-1+7\cdot2)/9=20$, and hence $\dim J=25$. Using GAP, we see that $J$ is a Jordan algebra.
If $m$ is odd, then either $m=5$ and $h=0$ or $m=3$ and $h=1$. In the first case,
we find that $|D|=(2^9-1-2^4)/3=165$, $\dim M^\perp=8(2^7-1+2^3)/9=120$, and $\dim J=45$.
In the second case, $|D|=4(2^5-1-2^2)/3=36$, $\dim M^\perp=8(2^3-1+2)/9=8$, $\dim J=28$.
Using GAP, we see that $J$ is a Jordan algebra in these cases.

Assume that the type of $G$ is ${\bf PR7(d)}$. In this case, $|D|=360$, $\dim M^\perp=252$, and $\dim J=108$.
We use the defining relations of $G$ from the Appendix of \cite{HS95} to do the calculations with $J$. Using GAP and Lemma~\ref{Jordan-id}, we verify that $J$ is not a Jordan algebra in this case.
\end{proof}

We conclude this section with the following

\begin{Problem}
In each case of Proposition \ref{p:main} find the smallest ideal $I$ such that $M/I$ is Jordan and identify the corresponding Jordan factors.
\end{Problem}

\section{Octonion and Albert algebras}\label{sec:albert}

Throughout this section we suppose that $\mathbb{F}$ is a field of characteristic not two and three.
Recall that an octonion algebra over $\mathbb{F}$ is a composition algebra that has dimension 8 over $\mathbb{F}$. This means that it is a unital non-associative algebra $\mathbb{O}$ over $\mathbb{F}$ with a non-degenerate quadratic form $N$ such that
$N(xy)=N(x)N(y)$ for all $x$ and $y$ in $\mathbb{O}$. For a given field $\mathbb{F}$, there may exist several octonion algebras, but if $\mathbb{F}$ is algebraically closed field, then all octonion algebras over $\mathbb{F}$ are isomorphic. We use the construction of an octonion algebra from \cite[Section 4.3.2]{Wilson}, which is a generalization of the real octonion algebra, also known as the Cayley numbers.

Take 7 mutually orthogonal square roots of $-1$, labeled $i_0,\ldots,i_6$
(with subscripts understood modulo 7), subject
to the condition that for each $t$, the elements $i_t$, $i_{t+1}$, $i_{t+3}$ satisfy the same
multiplication rules as $i$, $j$, and $k$ (respectively) in the quaternion algebra:
$ij=k=-ij, jk=i=-kj, ki=j=-ik$. For convenience, we write all their pairwise products in Table~\ref{t:oct}.

\begin{table}\centering\caption{Octonions}\label{t:oct}
\begin{tabular}{| c | c c c c c c c|}
\hline
$1$ & $i_0$ & $i_1$ & $i_2$ & $i_3$ & $i_4$ & $i_5$ & $i_6$ \\ \hline
$i_0$ & $-1$ & $i_3$ & $i_6$ & $-i_1$ & $i_5$ & $-i_4$ & $-i_2$ \\
$i_1$ & $-i_3$ & $-1$ & $i_4$ & $i_0$ & $-i_2$ & $i_6$ & $-i_5$ \\
$i_2$ & $-i_6$ & $-i_4$ & $-1$ & $i_5$ & $i_1$ & $-i_3$ & $i_0$ \\
$i_3$ & $i_1$ & $-i_0$ & $-i_5$ & $-1$ & $i_6$ & $i_2$ & $-i_4$ \\
$i_4$ & $-i_5$ & $i_2$ & $-i_1$ & $-i_6$ & $-1$ & $i_0$ & $i_3$ \\
$i_5$ & $i_4$ & $-i_6$ & $i_3$ & $-i_2$ & $-i_0$ & $-1$ & $i_1$ \\
$i_6$ & $i_2$ & $i_5$ & $-i_0$ & $i_4$ & $-i_3$ & $-i_1$ & $-1$ \\ \hline
\end{tabular}
\end{table}

Now we define the Albert algebra $A(\mathbb{F})$ corresponding to $\mathbb{O}$.
Elements of $A(\mathbb{F})$ are $3\times 3$ Hermitian matrices (i.e. matrices $x$ such that $x^T=\overline{x}$) over the octonion algebra $\mathbb{O}$. For brevity let us define
$$(d, e, f~|~D, E, F) =
\left(\begin{matrix}
d & F & \overline{E} \\
\overline{F} & e & D \\
E & \overline{D} & f
\end{matrix}
\right),$$
where $d, e, f$ lie in $\mathbb{F}$ and $\overline{\phantom{a}}$ denotes the octonion conjugation, i.e., the linear map fixing 1 and negating $i_n$ for all $n$.
Multiplication of such matrices makes sense, and the Jordan product $X\circ Y=\frac{1}{2}(XY+YX)$
for every $X, Y\in A(\mathbb{F})$ allows to consider $A(\mathbb{F})$ as a simple Jordan algebra.

Fix the following idempotents of the Albert algebra $A(\mathbb{F})$:
$$a = \frac{1}{2}(1,1,0 \mid 0,0,i_0) =\frac{1}{2}
\begin{pmatrix}
 1 & i_0 & 0 \\
-i_0 & 1 & 0 \\
0 & 0 & 0
\end{pmatrix},~
b = \frac{1}{2}(1,0,1 \mid 0,i_1,0) =\frac{1}{2}
\begin{pmatrix}
 1 & 0 & -i_1 \\
0 & 0 & 0 \\
i_1 & 0 & 1
\end{pmatrix},
$$
$$c = \frac{1}{2}(0,1,1 \mid i_2,0,0) =\frac{1}{2}
\begin{pmatrix}
0 & 0 & 0 \\
0 & 1 & i_2 \\
0 & -i_2 & 1
\end{pmatrix},~
d = \frac{1}{9}(1,4,4\mid 4i_4,2i_3,2i_6) =\frac{1}{9}
\begin{pmatrix}
 1 & 2i_6 & -2i_3 \\
-2i_6 & 4 & 4i_4 \\
2i_3 & -4i_4 & 4
\end{pmatrix}.
$$

\begin{prop}\label{p:albert} The Albert algebra $A(\mathbb{F})$ is an axial $\mathbb{F}$-algebra of Jordan type $\frac{1}{2}$ generated by four primitive axes $a,b,c,d$.
\end{prop}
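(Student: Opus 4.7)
The plan is to verify the axial-algebra axioms using the Jordan structure of $A(\mathbb{F})$ and then establish generation by direct computation.

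First, I would check by direct matrix computation that each of $a,b,c,d$ is an idempotent under the Jordan product $X\circ Y=\tfrac12(XY+YX)$, using that $X\circ X=X^2$ reduces to the ordinary matrix square. For $a$, $b$, $c$ the verification uses only $i_n^2=-1$, while for $d$ it additionally requires the products $i_3i_4=i_6$, $i_4i_6=i_3$, $i_6i_3=i_4$ from Table~\ref{t:oct}. The generic trace of each of $a,b,c,d$ equals $1$, so they are rank-one, hence primitive, idempotents in $A(\mathbb{F})$.

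Second, since $A(\mathbb{F})$ is a Jordan algebra, Lemma~\ref{l:peirce} applied to each idempotent $e\in\{a,b,c,d\}$ yields the Peirce decomposition $A(\mathbb{F})=A_1(e)\oplus A_{1/2}(e)\oplus A_0(e)$ together with the fusion rules that define an axial algebra of Jordan type $\tfrac12$. To see that $e$ is a primitive axis one must further verify $A_1(e)=\mathbb{F}e$. This is a standard property of rank-one idempotents in the Albert algebra and can be established by a direct linear-algebra computation: one writes a general Hermitian $X$ in coordinates and solves the single equation $e\circ X=X$, checking in each of the four cases that the solution space is spanned by $e$.

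Third, and this is the main content of the proposition, one must show that the subalgebra $B=\langle a,b,c,d\rangle$ coincides with the full $27$-dimensional Albert algebra. A convenient basis of $A(\mathbb{F})$ is formed by the three diagonal matrix units $E_1,E_2,E_3$ together with the $24$ off-diagonal elements obtained by placing each of $1,i_0,\ldots,i_6$ in each of the three off-diagonal slots. The generators $a,b,c$ supply the units $i_0,i_1,i_2$ in the three distinct off-diagonal positions, and $d$ introduces $i_3,i_4,i_6$; the remaining unit $i_5$ arises from octonion products such as $i_0i_4=i_5$ (Table~\ref{t:oct}) realised through appropriate Jordan products of the generators. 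The three diagonal units and the identities in the off-diagonal slots are recovered by forming suitable linear combinations of such products.

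The main obstacle is this final generation step: it requires a long but finite calculation, most conveniently performed in a computer algebra system, showing that iterated Jordan products of $a,b,c,d$ span a $27$-dimensional subspace. The specific coefficients $\tfrac19,\tfrac49,\tfrac49$ and the choice of octonion entries $i_4,i_3,i_6$ in $d$ appear crafted precisely so that these four axes generate the whole Albert algebra; a generic fourth primitive idempotent would not be expected to suffice.
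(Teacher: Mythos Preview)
Your proposal is correct and follows essentially the same approach as the paper: verify idempotency and primitivity via the trace-$1$ criterion, invoke the Peirce decomposition (Lemma~\ref{l:peirce}) for the Jordan-type-$\tfrac12$ fusion rules, and establish generation by a finite computer-assisted calculation. The paper carries out the generation step more explicitly by listing $27$ specific iterated Jordan products of $a,b,c,d$ and computing the determinant of their coordinate matrix in the standard basis (it equals $2^{-78}3^{-36}$), rather than recovering the standard basis elements one by one as you sketch, but this is only a difference in bookkeeping.
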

\begin{proof}
We claim that as a basis of $A(\mathbb{F})$ we can take the following 27 elements:

$$a, b, c, d,ab=\frac{1}{8}(2,0,0~|~i_3, i_1, i_0), ac=\frac{1}{8}(0,2,0~|~i_2, -i_6, i_0), ad=\frac{1}{36}(2,8,0~|~-2i_1+4i_4, 2i_3-4i_5,5i_0+4i_6),$$
$$bc=\frac{1}{8}(0,0,2~|~i_2,i_1,i_4), bd=\frac{1}{36}(2,0,8~|~4i_4+2i_5,5i_1+4i_3,2i_6-4i_2),
cd=\frac{1}{18}(0,4,4~|~4i_2+4i_4,i_0+i_3,i_6-i_5),
$$
$$
a(bc)=\frac{1}{32}(0,0,0~|~i_2+i_3, i_1-i_6, 2i_4),
b(ac)=\frac{1}{32}(0,0,0~|~i_2+i_3, -2i_6, i_0+i_4),
$$
$$
c(ab)=\frac{1}{32}(0,0,0~|~2i_3, i_1-i_6, i_0+i_4), a(bd)=\frac{1}{144}(4,0,0~|~5i_3-4i_1+4i_4+2i_5, 5i_1+4i_3+2i_4-4i_5, 2i_0-8i_2+4i_6)
$$
$$
a(cd)=\frac{1}{72}(0,8,0~|~-1-i_1+4i_2+4i_4, i_0+i_3-4i_5-4i_6, 4i_0-2i_5+2i_6)
$$
$$
b(ad)=\frac{1}{144}(4, 0, 0~|~-2i_1+5i_3+4i_4+4i_5, 2i_1+4i_3-8i_5, 2+5i_0-4i_2+4i_6)
$$
$$
b(cd)=\frac{1}{72}(0, 0, 8~|~4i_2+4i_4+i_5+i_6, 2i_0+4i_1+2i_3, -4i_2+4i_4-i_5+i_6)
$$
$$
c(ad)=\frac{1}{144}(0, 16, 0~|~-4i_1+8i_2+8i_4, 4i_0+2i_3-4i_5-5i_6, 5i_0-4i_3-2i_5+4i_6)
$$
$$
c(bd)=\frac{1}{144}( 0, 0, 16~|~ 8i_2+8i_4+4i_5, 4+2i_0+5i_1+4i_3, -4i_2+5i_4-4i_5+2i_6)
$$
$$
(ab)(cd)=\frac{1}{288}(0,0,0~|~-1-i_1+8i_3+i_5+i_6, 2i_0+4i_1-i_2+2i_3-i_4-4i_5-4i_6, -1+4i_0-i_1-4i_2+4i_4-2i_5+2i_6)
$$
$$
(ac)(bd)=\frac{1}{576}(0, 0, 0~|~2+4i_0-4i_1+8i_2+5i_3+8i_4+4i_5, 4+2i_0+2i_4-4i_5-10i_6, 2i_0+2i_1 -8i_2+4i_3+5i_4-4i_5+4i_6)
$$
$$
d(a(bc))=\frac{1}{576}(0, 0, 0~|~2+8i_2+8i_3+2i_5-4i_6, -8+2i_0+5i_1-2i_4-5i_6, -2-4i_2+4i_3+10i_4-2i_5)
$$
$$
d(b(ac))=\frac{1}{576}(0, 0, 0~|~4-2i_1+8i_2+8i_3-2i_6, -4+2i_0-2i_4-4i_5-10i_6, -2+5i_0+8i_3+5i_4-2i_5)
$$
$$
a(b(cd))=\frac{1}{288}(0, 0, 0~|~-2-2i_1+4i_2+4i_3+4i_4+i_5+i_6, 2i_0+4i_1+i_2+2i_3+i_4-4i_5-4i_6,
-8i_2+8i_4-2i_5+2i_6)
$$
\begin{multline*}
(ab)(c(ad))=\frac{1}{2304}(10, 10, 0~|~-4+4i_0-2i_1+5i_2+21i_3+4i_4+4i_5+2i_6, 4+8i_0+5i_1-2i_2+8i_3-4i_4-16i_5-18i_6, \\ 2+26i_0-4i_1-4i_2-8i_3+3i_4-4i_5+8i_6)
\end{multline*}
\begin{multline*}
(ab)(c(bd))=\frac{1}{2304}(10, 0, 10~|~-2-4i_0-4i_1+5i_2+21i_3+4i_4+2i_5+4i_6, 8+4i_0+26i_1-4i_2+8i_3+2i_4-4i_5-3i_6,\\ -4+5i_0-2i_1-16i_2-4i_3+18i_4-8i_5+8i_6)
\end{multline*}
\begin{multline*}
(ac)(b(cd))=\frac{1}{1152}(0, 8, 8~|~-1+4i_0-i_1+16i_2+8i_4+2i_5+2i_6, 4+i_0+4i_1+i_2+i_3+i_4-4i_5-12i_6,\\ 1+4i_0+i_1-8i_2+4i_3+12i_4-4i_5+4i_6).
\end{multline*}
All calculations are straightforward and can be done by hand\footnote{Calculations for this proof can be found in \url{https://github.com/AlexeyStaroletov/AxialAlgebras/blob/master/JordanFactors/AlbertAlgebra.g}}.
Now we write $27\times 27$ matrix of coefficients of these 27 elements with respect to the standard basis of $A(\mathbb{F})$ (i.e. $(1,0,0~|~0,0,0),\ldots, (0,0,0~|~0,0,i_6)$).
Using GAP, we find that the determinant of this matrix equals
$\frac{1}{2^{78}\cdot 3^{36}}$ and hence 27 elements form a basis of $A(\mathbb{F})$.

Since $A(\mathbb{F})$ is known to be a Jordan algebra and $a,b,c,d$ are its idempotents, Lemma~\ref{l:peirce} implies that
each of these elements gives a Peirce decomposition of the algebra.
According to~\cite[Section~4]{J60}, an idempotent $e$ in $A(\mathbb{F})$ is a primitive axis iff $Tr(e)=1$, where $Tr$ means the trace of $e$, i.e. the sum of elements on its diagonal.
Therefore, we infer that $a,b,c,d$ are primitive axes generating $A(\mathbb{F})$. This completes the proof of the proposition.
\end{proof}

\begin{cor} If the characteristic of $\mathbb{F}$ equals zero, then
  $A(\mathbb{F})$ is not a factor of any of the Matsuo algebras.
\end{cor}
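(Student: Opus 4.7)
The plan is to argue by contradiction: suppose $A(\mathbb{F})\cong M/I$ for some Matsuo algebra $M=M_{\eta}(G,D)$ and ideal $I\trianglelefteq M$, and derive a contradiction via Theorem~\ref{t:main}. The key facts to exploit are that the Albert algebra is the unique exceptional simple Jordan algebra, of dimension $27$, and that it is unital (in particular $A(\mathbb{F})^2=A(\mathbb{F})$).

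First I would reduce to the case where $(G,D)$ is a finite connected $3$-transposition group. Since $\dim A(\mathbb{F})=27$, only finitely many images $\overline{d_1},\dots,\overline{d_k}$ of elements of $D$ are needed to span $A(\mathbb{F})$. By the Cuypers--Hall local finiteness theorem, $G_0:=\langle d_1,\dots,d_k\rangle$ is finite; the Matsuo subalgebra $M_0\subseteq M$ spanned by $D\cap G_0$ still surjects onto $A(\mathbb{F})$, so one may assume $G$ is finite. Next, $M_\eta(G,D)$ splits as an algebra direct sum of Matsuo algebras indexed by the connected components of $(D)$. If $M=M_1\oplus M_2$ and $\varphi:M\to A(\mathbb{F})$ is the given surjection, each $\varphi(M_i)$ is an ideal of $A(\mathbb{F})$: indeed $M_1M_2=0$, so $\varphi(M_i)\cdot A(\mathbb{F})=\varphi(M_i)\cdot\varphi(M_i)\subseteq\varphi(M_i)$. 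As $A(\mathbb{F})$ is simple with $A(\mathbb{F})^2=A(\mathbb{F})$, exactly one $\varphi(M_i)$ equals $A(\mathbb{F})$; iterating, one may also assume $(G,D)$ is connected.

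Next I would invoke Proposition~\ref{p:summary}: every proper ideal of $M$ lies in $M^\perp$, which itself is proper because $(d,d)=1\neq 0$ for every $d\in D$. Simplicity of $A(\mathbb{F})$ makes $I$ a maximal ideal of $M$, so $I\subseteq M^\perp\subsetneq M$ forces $I=M^\perp$. Hence $M/M^\perp\cong A(\mathbb{F})$ is a $27$-dimensional Jordan algebra, and Theorem~\ref{t:main} applies. Cases (i) and (ii) give $\dim J=1\neq 27$, while case (iii) forces $\dim J\in\{1,\,m^2,\,\tfrac{m(m-1)}{2}\}$ for some $m\geq 3$; but $27$ is not a perfect square, and $m(m-1)=54$ has no integer solution (since $6\cdot 7=42$ and $7\cdot 8=56$). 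This contradiction finishes the proof.

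The main obstacle is the first paragraph — carefully verifying the reduction to a finite connected Matsuo algebra, in particular arguing that a simple quotient of a direct-sum Matsuo algebra factors through a single connected summand and that the locally-finite restriction still surjects onto $A(\mathbb{F})$. Once we are inside the finite connected case, the dimension-count against Theorem~\ref{t:main} is essentially one line.
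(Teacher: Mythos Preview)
Your proof is correct and follows essentially the same route as the paper: reduce to a connected Matsuo algebra using simplicity of $A(\mathbb{F})$, identify the kernel with $M^\perp$ via Proposition~\ref{p:summary}, and then rule out dimension $27$ against the list in Theorem~\ref{t:main}. Your version is in fact more careful than the paper's, which compresses the argument to two lines and does not spell out the reduction to the finite case via local finiteness; that reduction is genuinely needed since Theorem~\ref{t:main} is stated only for finite $(G,D)$.
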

\begin{proof}
 Suppose $(G,D)$ is a 3-transposition group and $M=M_\eta(G,D)$ is its Matsuo algebra for $\eta\in\mathbb{F}\setminus\{0,1\}$ such that $A(\mathbb{F})$ is a factor of $M$. Since $A(\mathbb{F})$ is simple, we can assume that $(D)$ is connected. Now $\dim_{\mathbb{F}}A(\mathbb{F})=27$ and the result follows from Proposition~\ref{p:albert} and Theorem~\ref{t:main}.
\end{proof}

\section*{Acknowledgments}
The authors would like to thank Prof. Sergey Shpectorov for
helpful comments and remarks.

\Addresses
\end{document}